\DeclareSymbolFont{T2Aletters}{T2A}{cmr}{m}{it}
\newcommand\Ni{\CYRI}
\date{\today}
\newcommand{\bbC}{{\mathbb{C}}}
\newcommand{\bbD}{{\mathbb{D}}}
\newcommand{\bbN}{{\mathbb{N}}}
\newcommand{\bbS}{{\mathbb{S}}}
\newcommand{\bbT}{{\mathbb{T}}}
\newcommand{\bbZ}{{\mathbb{Z}}}
\newcommand{\calA}{{\mathcal{A}}}
\newcommand{\calE}{{\mathcal{E}}}
\newcommand{\calL}{{\mathcal{L}}}
\newcommand{\calM}{{\mathcal{M}}}
\newcommand{\calR}{{\mathcal{R}}}
\newcommand{\scrL}{{\mathscr{L}}}
\newcommand{\vertiii}[1]{{\left\vert\kern-0.25ex\left\vert\kern-0.25ex\left\vert #1 
    \right\vert\kern-0.25ex\right\vert\kern-0.25ex\right\vert}}
\newcommand{\subst}{{\mathsf{S}}}
\newcommand{\TM}{{\mathrm{TM}}}
\DeclareMathOperator{\orb}{orb}
\DeclareMathOperator{\hull}{hull}
\DeclareMathOperator{\shift}{sh}
\newcommand{\refl}{{\mathrm{Refl}}}
\newtheorem{theorem}{Theorem}[section]
\newtheorem{lemma}[theorem]{Lemma}
\newtheorem{coro}[theorem]{Corollary}
\theoremstyle{definition}
\newtheorem{definition}[theorem]{Definition}
\newtheorem{remark}[theorem]{Remark}
\newtheorem*{notation}{Notation}
\allowdisplaybreaks \numberwithin{equation}{section}
\def\subsection{\@startsection{subsection}{2}%
	\z@{.5\linespacing\@plus.7\linespacing}{.5\linespacing}%
	{\normalfont\scshape\centering}}
\definecolor{purple}{rgb}{.5,0,1}
\title[Continuous Spectrum for CMV matrices via Reflections]{Absence of Bound States for Quantum Walks\\ and CMV Matrices via Reflections}
\author[C.\ Cedzich]{Christopher Cedzich}
\email{\href{mailto:cedzich@hhu.de}{cedzich@hhu.de}}
\address{Heinrich Heine Universit\"at D\"usseldorf, Universit\"atsstr. 1, 40225 D\"usseldorf, Germany}
\author[J.\ Fillman]{Jake Fillman}
\email{\href{mailto:fillman@tamu.edu}{fillman@tamu.edu}}
\address{Department of Mathematics, Texas A\&M University, College Station, TX 77843-3368, USA}
\date{}
\begin{document}
\maketitle

\begin{abstract}
We give a criterion based on reflection symmetries in the spirit of Jitomirskaya--Simon to show absence of point spectrum for (split-step) quantum walks and Cantero--Moral--Vel\'azquez (CMV) matrices. 
To accomplish this, we use some ideas from a recent paper by the authors and collaborators to implement suitable reflection symmetries for such operators. 
We give several applications. 
For instance, we deduce arithmetic delocalization in the phase for the unitary almost-Mathieu operator and singular continuous spectrum for generic CMV matrices generated by the Thue--Morse subshift.
\end{abstract}
\medskip

\textbf{Keywords.}  unitary almost Mathieu operator, 
		singular continuous spectrum, 
		resonances,
		quantum walks, 
		CMV matrices, 
		delocalization, 
		quasi-periodicity
		\medskip
		
		\textbf{MSC 2020 Classifications. Primary:} 47B93   \textbf{Secondary: } 47A10, 47B36, 58J51, 81Q10
		\bigskip

\section{Introduction}

One of the fundamental issues in the theory of ergodic operators is to establish the presence of Anderson localization (i.e., pure point spectrum with exponentially decaying eigenfunctions) whenever one suspects it to be present, and furthermore, one often suspects Anderson localization occurs for operator families in the regime of positive Lyapunov exponents. Indeed, positivity of the Lyapunov exponent is often the first step on a journey towards localization and provides a key input: namely, (via Ruelle's theorem \cite{Ruelle1979PMIHES}) positivity of the Lyapunov exponent ensures that for any energy for which the pointwise Lyapunov exponent exists, one has an exponential dichotomy of solutions and therefore generalized eigenvalues must decay exponentially. However, the phenomenon in question is in general very delicate, since one must be careful of the exceptional set on which the pointwise Lyapunov exponent fails to exist. This can be overcome, and hence the desired localization can be proved, using a variety of techniques, such as multi-scale analysis, fractional moment analysis, large deviations, repulsion of singular clusters, elimination of double resonances, and others; for references and further reading, we direct the reader to \cite{aizenmanRandomOperatorsDisorder2015, kirschInvitationRandomSchroedinger2007, ESO1,ESO2}.
These approaches have been pursued and implemented in a number of works concerning quasi-periodic \cite{CFO2023CMP, GuoPiao2020LAA, WangDamanik2019JFA, yang2022localization}, almost-periodic \cite{CFLOZ}, skew-shift \cite{CedzichWerner2021CMP, Kruger2013IMRN, LPG2023JFA} and random \cite{ASW2011JMP, BDFGVWZ2017TAMS, zhu2021localization, joyeDynamicalLocalizationQuantum2010a,hamzaLocalizationRandomUnitary2006} CMV matrices. 

One is also naturally interested in understanding structures that preclude localization in the regime of positive exponents (either globally, or locally in energy). The present paper accomplishes precisely this task for CMV operators in the presence of suitable reflection symmetries by synthesizing ideas of \cite{CFLOZ} to exploit gauge-invariance in order to implement the approach of Jitomirskaya--Simon \cite{jitomirskayaOperatorsSingularContinuous1994}. The main results of this work in particular show that several localization results recently obtained for the unitary almost-Mathieu operator \cite{CFO2023CMP, yang2022localization} and mosaic model \cite{CFLOZ} are sharp in the sense that localization for almost every phase cannot be strengthened to localization for all phases. Furthermore, the method gives generic absence of eigenvalues for CMV matrices and quantum walks generated by subshifts containing many palindromes.  In particular, we obtain the first results on singular continuous spectrum for CMV matrices generated by the Thue--Morse subshift, to our knowledge.

 Let us briefly recall some of the background literature. In the setting of self-adjoint operators Jitomirskaya--Simon gave a criterion for Schr\"odinger operators to have purely continuous spectrum \cite{jitomirskayaOperatorsSingularContinuous1994}. This was adapted to palindromic subshift operators by Hof--Knill--Simon \cite{HofKniSim1995CMP} and developed further in works such as Koslover \cite{kosloverJacobiOperatorsSingular2005} and Jitomirskaya--Liu \cite{JitoLiu2023JEMS}. 
As was noted in \cite{CFLOZ}, arguments based on reflections to be more delicate (even outright impossible) for standard CMV operators, due to a lack of appropriate reflection symmetries. However, the authors and collaborators observed in \cite{CFLOZ} that one can overcome this by passing to the setting of so-called \emph{generalized} CMV matrices via complexification and relating the complexified CMV matrix to the initial operator via a diagonal gauge transformation that \emph{preserves} the Verblunsky coefficients. Let us also mention a different obstruction to localization, known as the Gordon criterion \cite{Gordon1976}, is based on the presence of suitable repetitions, rather than reflections. This is significantly more straightforward and has already been worked out for CMV matrices in \cite{F2017PAMS}; see also \cite{LDZ2022TAMS} for an arithmetic strengthening and \cite{Ong2012JMAA,Ong2014JSP} for earlier results. Recently, Avila and Damanik have announced another approach to delocalization that applies to ergodic self-adjoint operators in arbitrary dimensions. \cite{AvilaDamanikDetermDeloc}

CMV matrices have physical significance, as they are (equivalent to) one-dimensional split-step quantum walks, which was first observed in \cite{canteroMatrixvaluedSzegoPolynomials2010} and elaborated upon in \cite{CFLOZ}. Split-step quantum walks are the simplest type of the rich class of quantum walks which have gained popularity in recent years as models of discrete time quantum dynamics. In this sense, quantum walks play a role similar to that of Jacobi matrices for continuous time evolutions generated by general self-adjoint operators, and one is therefore naturally interested in their dynamical \cite{ASW2011JMP, SpacetimeRandom,grimmett2004weak, AVWW2011JMP,joyeDynamicalLocalizationQuantum2010a, aschLowerBoundsLocalisation2019, joyeDynamicalLocalizationDdimensional2012} and spectral \cite{ewalks, CFGW2020LMP, richardQuantumWalksAnisotropic2018, bourgetSpectralAnalysisUnitary2003, joyeDensityStatesThouless2004} properties.
\medskip

 We formulate our main results precisely in Section~\ref{sec:results} and prove them in Section~\ref{sec:proofs}.

\subsection*{Acknowledgements}
C.\ C.\ was supported in part by the Deutsche Forschungsgemeinschaft (DFG, German Research Foundation) under the grant number 441423094.
J.\ F.\ was supported in part by National Science Foundation (NSF) grant DMS-2213196.
J.\ F.\ thanks the American Institute of Mathematics for hospitality during a recent SQuaRE program, which facilitated the work.
The authors are grateful to the reviewers for carefully reading the manuscript and for helpful comments.

\section{Results}\label{sec:results}
\subsection{General Results}
We begin by recalling the notion of generalized extended CMV (GECMV) matrices from \cite{CFLOZ}. Let $\bbS^3 = \{(\alpha,\rho) \in \bbC^2 : |\alpha|^2 + |\rho|^2 =1\}$ and consider a sequence $\{(\alpha_n,\rho_n)\} \in (\bbS^3)^\bbZ$. In the standard basis of $\ell^2(\bbZ)$, the corresponding GECMV matrix takes the form
\begin{equation} \label{eq:gecmv}
\calE
=
\calE_{\alpha,\rho}
=
\begin{bmatrix}
\ddots & \ddots & \ddots & \ddots &&&&  \\
& \overline{\alpha_0\rho_{-1}} & \boxed{-\overline{\alpha_0}\alpha_{-1}} & \overline{\alpha_1}\rho_0 & \rho_1\rho_0 &&&  \\
& \overline{\rho_0\rho_{-1}} & -\overline{\rho_0}\alpha_{-1} & {-\overline{\alpha_1}\alpha_0} & -\rho_1 \alpha_0 &&&  \\
&&  & \overline{\alpha_2\rho_1} & -\overline{\alpha_2}\alpha_1 & \overline{\alpha_3} \rho_2 & \rho_3\rho_2 & \\
&& & \overline{\rho_2\rho_1} & -\overline{\rho_2}\alpha_1 & -\overline{\alpha_3}\alpha_2 & -\rho_3\alpha_2 &    \\
&& && \ddots & \ddots & \ddots & \ddots &
\end{bmatrix},
\end{equation}
where we boxed the $(0,0)$ matrix element.  A \emph{standard} CMV matrix has the form \eqref{eq:gecmv} with $\rho_n \geq 0$ for all $n$; equivalently, $\rho_n = \sqrt{1-|\alpha_n|^2}$. The standard CMV matrix has an important role in mathematical physics, spectral theory, and orthogonal polynomials; for a textbook discussion that includes many important cases (e.g.\ periodic, random, and almost-periodic), see \cite{Simon2005OPUC1,Simon2005OPUC2}. For any sequence $\alpha = \{\alpha_n\}\in \bbD^\bbZ$, the associated standard CMV matrix and its cousin with complexified $\rho$ terms will be denoted by
\[\calE_{\alpha}:= \calE_{\alpha,\sqrt{1-|\alpha|^2}}, \quad \calE_\alpha^\bbC := \calE_{\alpha,i\sqrt{1-|\alpha|^2}}.\]
 By \cite{CFLOZ}, $\calE_\alpha$ and $\calE_\alpha^\bbC$ are equivalent via a gauge transform.

Given $u \in \bbC^\bbZ$, and $\zeta \in \frac{1}{2} \bbZ$, the \emph{reflection} of $u$ through the center $\zeta$ is given by
\begin{equation}
	(\calR_\zeta u)_n = u_{2\zeta-n}.
\end{equation}
In order to get useful estimates, we will need Verblunksy pairs with exponentially good approximate reflection symmetries locally on intervals that are exponentially long (with respect to the center of reflection). To generate such symmetries for a suitably rich set of phases in the ergodic setting below, it is useful to begin with a sequence that obeys exact symmetries globally. The following notions make this precise.

We say that $\alpha = \{\alpha_n\} \in \bbD^\bbZ$ is \emph{reflection symmetric} about the center $\zeta$ if 
\begin{equation} \label{eq:alphasymm}
	(\calR_\zeta \alpha)_{n} \equiv \overline{\alpha_n}.
\end{equation}

 It is no loss of generality to assume that the center of reflection is $0$, which we will do throughout the paper: we simply say that $\alpha$ is \emph{reflection symmetric} if it is reflection symmetric about $\zeta=0$.

 The following definition captures precisely the approximate local reflection symmetries that we will need.

\begin{definition} \label{def:JitoSim:Brefl}
	Given $B>0$ and $\zeta \in \tfrac{1}{2}\bbZ$, we say that $\alpha \in \bbD^\bbZ$ is $(B,\zeta)$\emph{-reflective} if
	\begin{equation}
		\big| (\calR_\zeta \alpha)_n -\overline{\alpha_n} \big| < e^{-B|\zeta|} \quad \forall n \in \bbZ \text{ such that } |\zeta-n|\leq e^{B|\zeta|}.
	\end{equation}
	We say that $\alpha$ is $B$\emph{-reflective} if it is $(B,\zeta)$-reflective for infinitely many $\zeta \in \frac{1}{2}\bbZ$. Clearly, $\alpha$ is $(B,\zeta)$-reflective for every $B>0$ whenever it is reflection symmetric about $\zeta$, but not conversely.
\end{definition}

Throughout, we assume that the Verblunsky coefficients are bounded away from $1$ in modulus:
\begin{equation} \label{eq:VCbd}
\|\alpha\|_\infty = r_0 < 1,
\end{equation}
which in particular implies 
\begin{equation} \label{eq:rhobound}
|\rho_n| \geq\big(1-r_0^2\big)^{1/2} =: c_0>0
\end{equation}
 uniformly in $n$. 

The main result is the following:

\begin{theorem} \label{t:JitomirskayaSimon:refl}
Assume $\alpha$ satisfies \eqref{eq:VCbd}.
There is a constant $B_0$ such that the following is true: 
If $\alpha$ is $B$-reflective with $B > B_0$,	then $\calE=\calE_{\alpha,\rho}$ has empty point spectrum for any admissible\footnote{That is, any choice of $\rho$ satisfying $|\alpha_n|^2 + |\rho_n|^2 \equiv 1$.} choice of $\rho$; in particular, the standard CMV matrix $\calE_\alpha$ has empty point spectrum.
\end{theorem}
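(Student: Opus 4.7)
The plan is to mimic the Jitomirskaya--Simon scheme \cite{jitomirskayaOperatorsSingularContinuous1994} in the CMV setting, leveraging the gauge-invariance from \cite{CFLOZ} to extract a usable reflection symmetry. First I would reduce to the complexified CMV matrix $\calE^\bbC_\alpha$: by the gauge equivalence from \cite{CFLOZ}, any admissible $\calE_{\alpha,\rho}$ is unitarily equivalent to $\calE^\bbC_\alpha$ via a diagonal unitary that preserves $\alpha$, so the two operators share the same point spectrum. It therefore suffices to treat $\calE^\bbC_\alpha$. The advantage of passing to $\calE^\bbC_\alpha$ is that \eqref{eq:alphasymm} automatically gives \eqref{eq:rhosymm} for $\rho^\bbC$, and these are precisely the symmetries that render the matrix entries in \eqref{eq:gecmv} invariant under a combined reflection/conjugation about $\zeta$; in the case of exact symmetry one expects $\calE^\bbC_\alpha$ to commute with such a reflection, and in the $B$-reflective case this commutation should hold up to an error of size $e^{-B|\zeta|}$ on the window $|n-\zeta|\leq e^{B|\zeta|}$.

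Second, I would set up the Szeg\H{o}/Gesztesy--Zinchenko transfer matrices $T(n,m;z)$ for the eigenvalue equation $\calE^\bbC_\alpha \psi = z\psi$ on $|z|=1$, encoding a candidate eigenfunction by a $2$-vector $\Phi(n;z)$. The uniform bounds \eqref{eq:VCbd}--\eqref{eq:rhobound} yield $\|T(n,m;z)^{\pm 1}\| \leq e^{B_0|n-m|}$ with $B_0 := \log(1/c_0)$. Using Definition \ref{def:JitoSim:Brefl}, I would then derive a quantitative reflection-exchange identity for $n$ in the good window:
\[
T(2\zeta-n,\, 2\zeta;\, z) \;=\; \sigma\, \overline{T(n,\zeta;z)}\, \sigma^{-1} \;+\; O\!\left( e^{-(B-B_0)|\zeta|}\right),
\]
where $\sigma$ is a fixed $2\times 2$ matrix implementing the symmetry; the error arises by iterating the pointwise Verblunsky error $e^{-B|\zeta|}$ across $|\zeta|$ many transfer steps and absorbing the worst-case amplification $e^{B_0|\zeta|}$.

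Third, I would run the Jitomirskaya--Simon contradiction argument. Suppose for contradiction that $\psi \in \ell^2(\bbZ)$ is a normalized eigenfunction and pick $m$ such that $\|\Phi(m;z)\|$ is bounded below by a fixed positive constant. For any of the infinitely many reflection centers $\zeta$ with $|\zeta| \gg |m|$, the transfer-matrix bound $\|\Phi(\zeta;z)\| \leq e^{B_0|\zeta-m|}\|\Phi(m;z)\|$ combined with the reflection-exchange identity (propagating from $m$ across $\zeta$ to $2\zeta-m$) yields
\[
\|\Phi(2\zeta-m;z)\| \;\geq\; \tfrac12 \|\Phi(m;z)\|
\]
once $B$ exceeds a threshold $B_0$ depending only on $r_0$ (say $B > 3B_0$). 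Consequently $|\psi(2\zeta-m)| + |\psi(2\zeta-m+1)|$ is bounded below by a fixed positive constant at infinitely many distinct reflected points $2\zeta-m$, contradicting $\psi \in \ell^2$.

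The main technical obstacle is the second step: establishing the reflection-exchange identity for CMV transfer matrices with the correct phase/sign bookkeeping. In the Schr\"odinger/Jacobi setting this reduces to a one-line $2\times 2$ manipulation, but here the two-step structure of the CMV recursion (distinguishing even from odd indices) together with the five-diagonal form of \eqref{eq:gecmv} makes the symmetry delicate, and it is exactly this bookkeeping that forces the use of $\rho^\bbC$ in place of the nonnegative $\rho$, since only \eqref{eq:rhosymm} (and not its analog with $\rho_{-n}=\rho_n$) is compatible with the conjugations appearing in the factorization of the transfer matrices.
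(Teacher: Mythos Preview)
Your reduction to the complexified operator $\calE_\alpha^\bbC$ is correct and matches the paper's first move. However, your third step contains a genuine gap: the claimed inequality $\|\Phi(2\zeta-m;z)\| \geq \tfrac12\|\Phi(m;z)\|$ does not follow from a transfer-matrix reflection identity alone. Even in the exactly symmetric case, the composite map $T(2\zeta-m,\zeta;z)\,T(\zeta,m;z) = T(2\zeta-m,m;z)$ need not be bounded below on arbitrary vectors---when the Lyapunov exponent is positive this product typically has one exponentially large and one exponentially small singular value, and nothing in your argument forces $\Phi(m)$ to lie in the expanding direction. A related issue: your identity involves $\overline{T(n,\zeta;z)}$, but conjugating the transfer matrix replaces $z$ by $\bar z$, so the conjugated cocycle propagates solutions of $\calE^\bbC_\alpha u = \bar z u$ rather than the original equation, and you cannot feed $\Phi(\cdot;z)$ into it.

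What your outline is missing is the Wronskian step, which is the heart of the Jitomirskaya--Simon mechanism and is precisely where the paper spends most of its effort. The paper defines the reflected sequence $u_i(k)=u(4m_i-k-1)$ (no conjugation) and shows by a lengthy direct calculation (Lemma~\ref{lem:W-W} and the Appendix) that $W(u,u_i)$ is approximately constant on the good window; since $u,u_i\in\ell^2$, a pigeonhole argument then forces $|W(u,u_i)|\lesssim e^{-B\zeta_i}$ throughout the window. This approximate vanishing of the Wronskian is what yields approximate linear dependence of $u$ and $u_i$ at the center, namely $\|\Phi_i^{s_i}(m_i)\|\lesssim e^{-B\zeta_i/2}$ for some sign $s_i$ (Lemma~\ref{lem:si_choice}). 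Only \emph{then} does the paper invoke a transfer-matrix comparison---your step~2, made precise in Lemma~\ref{lem:Phi0->0} as a comparison between $[N^{m_i}_z]^{-1}$ and the reflected cocycle $\Ni^{[2m_i-1,m_i]}_z$---to propagate this smallness from the center back to the origin, concluding $\Phi(0)=0$. The transfer-matrix identity is used to control an \emph{already small} quantity against the crude growth bound $C^{m_i}$, not to manufacture a lower bound from nothing.
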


\begin{remark} \label{rem:mainThmComments}
 A few comments are in order.
\begin{enumerate}[label={\rm(\alph*)}]
\item A straightforward attempt to imitate the proof of \cite{jitomirskayaOperatorsSingularContinuous1994} for standard CMV matrices leads to roadblocks arising from the lack of relevant symmetries. For instance, it is clear that an estimate of the form \eqref{eq:rhosymm} is needed to obtain useful estimates on the quantities in, for example, \eqref{eq:longWronskiCalc} and \eqref{lem:25:Phiipm0setup}. Thus, in addition to the identity \eqref{eq:alphasymm} for the $\alpha$'s, one also must have a symmetry of the type \eqref{eq:rhosymm} for the $\rho$'s, and such a symmetry cannot hold for standard CMV matrices, so one absolutely must complexify to reveal the necessary symmetry. 
\item In fact, as the reader will see, even with the new ideas using gauge transforms to relate to the complexified $\rho$ case, the computations are somewhat more laborious in the present setting. For instance, one may compare the proof of Lemma~\ref{lem:W-W} to the corresponding computation in \cite[Step~1]{jitomirskayaOperatorsSingularContinuous1994}. Similarly, the absence of symplectic symmetry (even after complexification) makes the comparison of ``reflected'' and inverted transfer matrices quite delicate
\item If $\alpha$ satisfies \eqref{eq:VCbd}, then the hypothesis of Theorem~\ref{t:JitomirskayaSimon:refl} is fulfilled whenever $\alpha$ is even and real-valued.
\item The largeness condition on $B$ is concrete. To ensure that $z \in \partial \bbD$ is not an eigenvalue of $\calE_\alpha$, one simply needs $B$ to be large enough that
\begin{equation}
\lim_{n \to \infty} e^{-Bn}\big\|N_{\ell+n-1,z}\cdots N_{\ell+1,z}N_{\ell,z}\big\| = 0
\end{equation}
uniformly in $\ell \in \bbZ$, where $N_{n,z}$ denote the transfer matrices (see Lemma~\ref{lem:B}).
Thus, for ergodic models over strictly ergodic base dynamics (for which one may apply Furman's uniform subadditive ergodic theorem \cite{Furman1997AIHP}), one can phrase the relevant bound in terms of the Lyapunov exponent \emph{locally} in the spectral parameter and deduce \emph{arithmetic} delocalization; compare Theorem~\ref{t:JL:quant} and Corollary~\ref{coro:genQP}.

\item When applying the results to general quantum walks that are not given by GECMV matrices, some care is needed. For instance, the electric walks in \cite{ewalks,CedzichWerner2021CMP} have symmetric coins, but application of the gauge transform leads to a CMV matrix that does not satisfy the necessary symmetries, so our results do not apply in that setting.

\item It follows readily from the result of Damanik--Killip \cite{DamanikKillip2000JFA} that for any almost-periodic sequence $\alpha$ that is not limit-periodic, the set of reflective $\omega \in \hull(\alpha)$ has zero Haar measure.
\end{enumerate}
\end{remark}

Theorem~\ref{t:JitomirskayaSimon:refl} can be applied to almost-periodic CMV matrices. Recall that $\alpha \in \bbC^\bbZ$ is called \emph{almost-periodic} if its orbit
\[\orb(\alpha):= \{\alpha(\cdot - n) : n \in \bbZ\}\]
is relatively compact in $\ell^\infty(\bbZ)$. In that case, the closure (which is compact) is called the \emph{hull}:
\begin{equation} \label{eq:hulldef}
\hull(\alpha) := \overline{\orb(\alpha)}^{\|\cdot\|_\infty}.
\end{equation}

\begin{coro} \label{coro:JitomirskayaSimon:main}
	Assume $\alpha :\bbZ \to \bbD$ satisfies \eqref{eq:VCbd}. If $\alpha$ is reflection-symmetric and almost-periodic, then there is a dense $G_\delta$ subset $\Omega_0 \subseteq \hull(\alpha)$ such that the extended CMV matrix $\calE_\omega$ has empty point spectrum for all $\omega \in \Omega_0$.
\end{coro}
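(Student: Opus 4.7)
The plan is to apply Theorem~\ref{t:JitomirskayaSimon:refl} pointwise on a dense $G_\delta$ subset of $\hull(\alpha)$, built by transporting the exact reflection symmetry of $\alpha$ at the origin to approximate reflection symmetries at arbitrarily distant centers via the relative density of almost-periods.

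Fix $B>B_0$, where $B_0$ is as in Theorem~\ref{t:JitomirskayaSimon:refl}, and for each $N \in \bbN$ define
\[
G_N := \bigcup_{\substack{\zeta \in \frac12\bbZ \\ |\zeta|\geq N}} \big\{\omega \in \hull(\alpha) : \omega \text{ is } (B,\zeta)\text{-reflective}\big\}.
\]
For each fixed $\zeta$, the defining inequality of $(B,\zeta)$-reflectivity is a finite conjunction of strict, $\ell^\infty$-continuous conditions on the entries of $\omega$, so the set inside the union is open in $\hull(\alpha)$; hence $G_N$ itself is open. Moreover, every $\omega \in \hull(\alpha)$ is an $\ell^\infty$-limit of translates of $\alpha$ and therefore inherits the bound $\|\omega\|_\infty \le r_0<1$, so the standing hypothesis \eqref{eq:VCbd} propagates through the hull. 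Thus, if I can show each $G_N$ is dense, Baire category applied to the compact metric space $\hull(\alpha)$ will yield that $\Omega_0 := \bigcap_N G_N$ is a dense $G_\delta$, every element of which is $B$-reflective and hence satisfies the hypotheses of Theorem~\ref{t:JitomirskayaSimon:refl}.

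The main step is the density of $G_N$, which rests on the observation that \emph{every} integer translate of $\alpha$ is exactly reflection-symmetric about its translation center: by \eqref{eq:alphasymm},
\[
\big(\alpha(\cdot-m)\big)_{2m-n} \;=\; \alpha_{m-n} \;=\; \overline{\alpha_{n-m}} \;=\; \overline{\big(\alpha(\cdot-m)\big)_n},
\]
so $\alpha(\cdot-m)$ is trivially $(B,m)$-reflective and lies in $G_N$ whenever $|m|\ge N$. Given $\omega \in \hull(\alpha)$ and $\epsilon>0$, I would first use density of $\orb(\alpha)$ in $\hull(\alpha)$ to choose $m \in \bbZ$ with $\|\omega-\alpha(\cdot-m)\|_\infty<\epsilon/2$, and then use relative density of $(\epsilon/2)$-almost-periods of $\alpha$ to select an almost-period $s \in \bbZ$ with $|m+s|\ge N$. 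Since $\|\alpha(\cdot-(m+s))-\alpha(\cdot-m)\|_\infty=\|\alpha(\cdot-s)-\alpha\|_\infty<\epsilon/2$, the element $\omega':=\alpha(\cdot-(m+s))$ lies in $G_N$ within $\epsilon$ of $\omega$.

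Applying Theorem~\ref{t:JitomirskayaSimon:refl} to each $\omega \in \Omega_0$ then gives empty point spectrum for $\calE_\omega$. I do not anticipate any substantial obstacle beyond the density argument itself, and even there the only nontrivial input is that the $\epsilon$-almost-periods of $\alpha$ form a relatively dense subset of $\bbZ$ and can therefore be chosen of arbitrarily large modulus in either direction.
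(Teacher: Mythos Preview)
Your proposal is correct and follows essentially the same route as the paper: define open sets of $(B,\zeta)$-reflective elements, show their density via the observation that each integer translate $\alpha(\cdot-m)$ is exactly reflection-symmetric about $m$, and intersect via Baire. The paper is terser on the density step (it simply notes that $\bigcup_{m\ge k}\refl(B,m)$ contains a translation semiorbit of $\alpha$, which is dense by minimality of the hull), whereas you spell out the same conclusion using relative density of almost-periods; the arguments are equivalent.
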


Note that in the setting of Corollary~\ref{coro:JitomirskayaSimon:main}, $\omega \in \hull(\alpha)$, so the Verblunsky coefficient sequence associated with $\calE_\omega$ is simply the sequence $\omega$ itself, that is, $\alpha_\omega(n)=\omega(n)$.
In addition, as alluded to in Remark~\ref{rem:mainThmComments}, one can make Corollary~\ref{coro:JitomirskayaSimon:main} quantitative and can exclude eigenvalues locally in the spectral parameter. In order to do that, we need the Lyapunov exponent $L(z)$ (which will be defined in \eqref{eq:LEdef}).
\begin{theorem} \label{t:JL:quant}
Assume $\alpha:\bbZ \to \bbD$ satisfies \eqref{eq:VCbd}. If $\alpha$ is almost-periodic and $B$-reflective, then, for any admissible $\rho$, $\calE_{\alpha,\rho}$ has no eigenvalues in the region $\{z \in \partial \bbD :  2L(z) < B\}$.
\end{theorem}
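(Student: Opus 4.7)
The plan is to combine the concrete local-in-$z$ criterion isolated in Remark~\ref{rem:mainThmComments}(d) with Furman's uniform subadditive ergodic theorem \cite{Furman1997AIHP}. Remark~\ref{rem:mainThmComments}(d) already asserts that $z \in \partial \bbD$ is not an eigenvalue of $\calE_\alpha$ as soon as $e^{-Bn}\|N_{\ell+n-1,z}\cdots N_{\ell,z}\| \to 0$ uniformly in $\ell \in \bbZ$, and the gauge-equivalence arguments used in the proof of Theorem~\ref{t:JitomirskayaSimon:refl} also transfer this conclusion to $\calE_{\alpha,\rho}$ for any admissible $\rho$. Hence the entire task reduces to producing, under the almost-periodicity hypothesis, a uniform upper bound of the form $\|N_{\ell+n-1,z}\cdots N_{\ell,z}\| \leq e^{(2L(z)+o(1))n}$ as $n \to \infty$.

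To this end, fix $z \in \partial \bbD$ with $2L(z) < B$ and set $\Omega = \hull(\alpha)$ equipped with the shift $T$. Since $\alpha$ is almost-periodic, $(\Omega, T)$ is strictly ergodic (minimal and uniquely ergodic), the unique invariant probability measure being the Haar measure on the associated Bohr compactification. The matrix cocycle
\begin{equation*}
A_n(z;\omega) := N_{n-1,z}(\omega) \cdots N_{0,z}(\omega)
\end{equation*}
is continuous in $\omega \in \Omega$, satisfies the subadditive comparison $\log\|A_{m+n}(z;\omega)\| \leq \log\|A_m(z;T^n\omega)\| + \log\|A_n(z;\omega)\|$, and by \eqref{eq:VCbd} has $\log\|N_{0,z}(\cdot)\|$ bounded above uniformly on $\Omega$. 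Furman's theorem therefore applies and yields
\begin{equation*}
\frac{1}{n}\log\|A_n(z;\omega)\| \;\xrightarrow[n\to\infty]{}\; 2L(z) \quad \text{uniformly in } \omega \in \Omega,
\end{equation*}
where the factor of $2$ is dictated by the normalization of $L(z)$ in \eqref{eq:LEdef}.

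Choosing $\epsilon \in (0, B - 2L(z))$ and specializing to $\omega = T^\ell \alpha$ in the displayed uniform limit yields $\|N_{\ell+n-1,z}\cdots N_{\ell,z}\| \leq e^{(2L(z)+\epsilon)n}$ for every $\ell \in \bbZ$ and every sufficiently large $n$. Hence $e^{-Bn}\|N_{\ell+n-1,z}\cdots N_{\ell,z}\| \leq e^{-(B-2L(z)-\epsilon)n} \to 0$ uniformly in $\ell$, and the criterion of Remark~\ref{rem:mainThmComments}(d) (together with the gauge transfer to arbitrary admissible $\rho$) finishes the argument. The only delicate bookkeeping issue I anticipate is pinning down the exact normalization factor relating $L(z)$ (as given by \eqref{eq:LEdef} and Lemma~\ref{lem:B}) to the exponential growth rate of the $A_n$'s; once this is verified to produce the factor ``$2L(z)$'', the remainder of the proof is a routine assembly of Furman's theorem with the pointwise-in-$z$ criterion of Remark~\ref{rem:mainThmComments}(d).
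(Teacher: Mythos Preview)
Your proposal is correct and follows essentially the same route as the paper's own proof sketch: both arguments re-run the machinery behind Theorem~\ref{t:JitomirskayaSimon:refl} (which is precisely what Remark~\ref{rem:mainThmComments}(d) packages as a black-box criterion) and then replace the crude $C^{m_i}$ bound on transfer-matrix products by the sharper $e^{2m_i(L(z)+\varepsilon)}$ bound supplied by Furman's semi-uniform subadditive ergodic theorem. One small wording issue: Furman's theorem only yields the one-sided uniform estimate $\limsup_n \sup_\omega \tfrac{1}{n}\log\|A_n(z;\omega)\| \leq 2L(z)$, not full uniform convergence to $2L(z)$; but since you only use the upper bound, this does not affect the argument.
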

This allows us to establish a corollary giving arithmetic delocalization in the spirit of \cite[Theorem~4.2]{JitoLiu2023JEMS}. To formulate the result, recall that a topological group is called \emph{monothetic} if it contains a dense cyclic subgroup (e.g., this class of groups includes $\bbZ$, $\bbZ_p$, $\bbT^d$, $\bbZ_p \times \bbT^d$, and certain procyclic groups such as the $p$-adic integers). Given a metrizable compact monothetic $\Omega$, a generator $\beta$ of a dense cyclic subgroup, and $f \in C(\Omega,\bbC)$, the sequence $(f(n\beta+\omega))_{n \in \bbZ}$ is almost-periodic\footnote{That is, its translates are precompact in $\ell^\infty(\bbZ)$.} for any choice of $\omega \in \Omega$. This is a standard fact whose proof can be found in various places, e.g., \cite[Section~3]{DF2020LP}.

 For a metrizable compact monothetic group $\Omega$ with translation-invariant metric $\mathrm{dist}$, we denote
\[\vertiii{\omega} = \vertiii{\omega}_\Omega: = \mathrm{dist}(\omega,0), \quad \omega \in \Omega.\]
For $\beta,\omega \in \Omega$, define:
\begin{equation} \label{eq:deltaBetaOmegaDef}
\delta(\beta,\omega) = \limsup_{|n|\to\infty} \left[- \frac{\log\vertiii{2\omega+n\beta}}{|n|}\right].
\end{equation}
\begin{coro} \label{coro:genQP}
Suppose $\Omega$ is a compact metrizable monothetic group, that $\beta,\omega \in \Omega$, and that the subgroup generated by $\beta$ is dense in $\Omega$. If
 $f:\Omega \to \bbD$ is Lipschitz continuous and satisfies $f(-\omega) \equiv \overline{f(\omega)}$, then for each $\omega \in \Omega$, the CMV matrix $\calE_\omega$ with coefficients
\[\alpha_\omega(n) = f(n\beta + \omega)\]
has no eigenvalues in the region $\{z \in \partial \bbD : L(z) < \delta(\beta,\omega)\}$.
\end{coro}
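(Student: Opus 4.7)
The plan is to reduce Corollary~\ref{coro:genQP} to Theorem~\ref{t:JL:quant} by showing that for every $B < 2\delta(\beta,\omega)$, the sequence $\alpha_\omega$ is both almost-periodic and $B$-reflective. Almost-periodicity is the standard fact for continuous sampling along a minimal translation on a compact monothetic group: the map $\omega' \mapsto (f(n\beta + \omega'))_n$ from $\Omega$ into $\ell^\infty(\bbZ)$ is continuous by Lipschitz continuity of $f$, and $\Omega$ is compact, so the shift orbit $\{\alpha_{\omega - k\beta} : k \in \bbZ\}$ has compact closure in $\ell^\infty$.

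For reflectivity, I would search for centers of the form $\zeta = m/2 \in \tfrac{1}{2}\bbZ$ with $m \in \bbZ$. Exploiting the symmetry $\overline{f(x)} = f(-x)$, a direct calculation gives
\[
(\calR_\zeta \alpha_\omega)_n - \overline{\alpha_\omega(n)} = f\bigl((2\zeta - n)\beta + \omega\bigr) - f(-n\beta - \omega),
\]
so the two arguments differ by $2\zeta\beta + 2\omega$, which is independent of $n$. Lipschitz continuity of $f$ (with some constant $C$) then yields the $n$-uniform bound
\[
\sup_{n \in \bbZ}\bigl|(\calR_\zeta \alpha_\omega)_n - \overline{\alpha_\omega(n)}\bigr| \leq C \vertiii{2\zeta\beta + 2\omega}.
\]

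Now fix $B < 2\delta(\beta,\omega)$ and choose $\delta'$ with $B < 2\delta' < 2\delta(\beta,\omega)$. By definition of $\delta(\beta,\omega)$ as a $\limsup$, there exist infinitely many $m_k \in \bbZ$ with $|m_k| \to \infty$ and $\vertiii{m_k\beta + 2\omega} < e^{-\delta'|m_k|}$. Setting $\zeta_k = m_k/2$, one has $|\zeta_k| = |m_k|/2$, so the previous estimate becomes
\[
\sup_{n \in \bbZ}\bigl|(\calR_{\zeta_k}\alpha_\omega)_n - \overline{\alpha_\omega(n)}\bigr| \leq C e^{-2\delta'|\zeta_k|} < e^{-B|\zeta_k|}
\]
for all $k$ large enough to absorb the constant $C$. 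The estimate being uniform in $n$, it holds in particular on the window $|\zeta_k - n| \leq e^{B|\zeta_k|}$ demanded by Definition~\ref{def:JitoSim:Brefl}. Hence $\alpha_\omega$ is $(B,\zeta_k)$-reflective for all but finitely many $k$, and therefore $B$-reflective. Applying Theorem~\ref{t:JL:quant} excludes eigenvalues from $\{z \in \partial\bbD : 2L(z) < B\}$, and taking the union over $B < 2\delta(\beta,\omega)$ yields the advertised region $\{z \in \partial\bbD : L(z) < \delta(\beta,\omega)\}$.

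There is no deep obstacle here: the proof is essentially bookkeeping. The only point requiring care is matching the factors of $2$ between the reflection center $\zeta$, the integer translation $m = 2\zeta$ in the argument of $f$, and the $2\omega$ appearing in the definition of $\delta(\beta,\omega)$; this alignment is what produces the exclusion threshold $L(z) < \delta(\beta,\omega)$ (rather than $L(z) < 2\delta(\beta,\omega)$ or $L(z) < \tfrac{1}{2}\delta(\beta,\omega)$).
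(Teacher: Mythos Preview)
Your proposal is correct and follows essentially the same approach as the paper's proof: both identify centers $\zeta_k = m_k/2$ with $\vertiii{2\omega + m_k\beta}$ small, use the symmetry $\overline{f(x)} = f(-x)$ together with Lipschitz continuity to obtain a uniform bound on $|(\calR_{\zeta_k}\alpha_\omega)_n - \overline{\alpha_\omega(n)}|$, and then invoke Theorem~\ref{t:JL:quant}. Your writeup is in fact slightly more careful about the factor-of-two bookkeeping (which the paper relegates to a footnote) and explicitly verifies almost-periodicity, but there is no substantive difference in strategy.
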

Let us briefly point out that \cite[Theorem~4.2]{JitoLiu2023JEMS} is formulated for $\Omega = \bbT$, but the proof they give carries through with cosmetic changes to the more general setting described in Corollary~\ref{coro:genQP}. Moreover, we need this slight generalization since the UAMO (viewed as a dynamically defined CMV matrix) is most naturally viewed as having base dynamics on $\bbT \times \bbZ_2$, as discussed below in \eqref{eq:UAMOdynamicalDef}.

\subsection{Examples and Applications}

The criterion in the main results can be applied to models of interest. For instance, localization questions for the unitary almost-Mathieu operator (UAMO) introduced in \cite{CFO2023CMP, FOZ2017CMP} have been studied in \cite{CFO2023CMP, yang2022localization}.
In light of \cite{CFO2023CMP,CFLOZ}, the UAMO family is gauge-equivalent to the family of GECMV matrices given by
    \begin{alignat*}{3} 
		\alpha_{2n-1} &= \lambda_2 \cos(2\pi(n\Phi+\theta)), \qquad\qquad &\alpha_{2n} &= \lambda_1',
    \end{alignat*}
where we abbreviate $\lambda_1':= (1-\lambda_1^2)^{1/2}$.
Let us denote the corresponding operators by $\mathcal{E}_{\lambda_1,\lambda_2,\Phi,\theta}^{\rm UAMO}$. Notice that we use the cosine rather than the sine (which is the definition given in \cite{CFO2023CMP}) since the cosine function is even and we want to emphasize reflection symmetries. 
\begin{coro} \label{coro:uamo}
For any  $\lambda_1,\lambda_2 \in (0,1)$ and irrational $\Phi$, $\calE_{\lambda_1, \lambda_2, \Phi, \theta}^{\rm UAMO}$ has empty point spectrum whenever
\begin{equation} \label{eq:deltaBoundUAMO}
\frac{1}{2} \delta(\Phi,\theta) > \log\left[ \frac{\lambda_2(1+\lambda_1')}{\lambda_1(1+\lambda_2')} \right].
\end{equation}
In particular, $\calE_{\lambda_1, \lambda_2, \Phi, \theta}^{\rm UAMO}$ has empty point spectrum for generic $\theta \in \bbT$ and has purely singular continuous spectrum whenever \eqref{eq:deltaBoundUAMO} and $\lambda_1<\lambda_2$ hold.
\end{coro}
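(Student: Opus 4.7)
The plan is to realize the UAMO as a dynamically defined CMV matrix over a compact monothetic base and then invoke Corollary~\ref{coro:genQP}. Since the Verblunsky sequence has a genuine period-two structure (the even-indexed coefficients are constant while the odd-indexed ones depend on $\theta$), I will take $\Omega = \bbT \times \bbZ_2$ with the natural product metric, set $\beta = (\Phi/2, 1) \in \Omega$ and $\omega = (\theta + \Phi/2, 1) \in \Omega$, and define $f:\Omega \to \bbD$ by $f(x,0) = \lambda_2 \cos(2\pi x)$ and $f(x, 1) = \lambda_1'$. Irrationality of $\Phi$ ensures that the subgroup generated by $\beta$ is dense in $\Omega$. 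A direct verification will then show that $\alpha_n^{\rm UAMO} = f(n\beta + \omega)$ for every $n \in \bbZ$, that $f$ is Lipschitz, and that, because both pieces of $f$ are even and real-valued, $f(-\eta) = \overline{f(\eta)}$ for all $\eta \in \Omega$.

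Next I will compute $\delta(\beta, \omega)$. Since $2\omega = (2\theta + \Phi, 0)$, for even $n = 2k$ one has $2\omega + n\beta = (2\theta + (k+1)\Phi, 0)$, so $\vertiii{2\omega + n\beta}_\Omega = \vertiii{2\theta + (k+1)\Phi}_\bbT$, whereas for odd $n$ the $\bbZ_2$-component equals $1$, forcing $\vertiii{2\omega + n\beta}_\Omega$ to be bounded below by a positive constant and contributing nothing to the limsup. A short manipulation then yields $\delta(\beta, \omega) = \tfrac{1}{2}\delta(\Phi, \theta)$.

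Plugging into Corollary~\ref{coro:genQP}, $\calE_{\lambda_1, \lambda_2, \Phi, \theta}^{\rm UAMO}$ has no eigenvalues in the region $\{z \in \partial \bbD : L(z) < \tfrac{1}{2}\delta(\Phi, \theta)\}$. The Lyapunov exponent on the spectrum was computed in \cite{FOZ2017CMP, CFO2023CMP} (by an Aubry-duality argument in the spirit of Herman) to equal
\[
\max\left\{0,\ \log\left[\frac{\lambda_2(1+\lambda_1')}{\lambda_1(1+\lambda_2')}\right]\right\},
\]
so condition \eqref{eq:deltaBoundUAMO} delivers $L(z) < \tfrac{1}{2}\delta(\Phi, \theta)$ for every $z \in \sigma(\calE^{\rm UAMO})$ and therefore the absence of point spectrum. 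For generic $\theta \in \bbT$ one has $\delta(\Phi, \theta) = \infty$ by a standard Baire-category argument (for each $c,N$ the set $\{\theta : \vertiii{2\theta + n\Phi}_\bbT < e^{-c|n|} \text{ for some } |n|\geq N\}$ is open and dense), so \eqref{eq:deltaBoundUAMO} holds automatically. Finally, when $\lambda_1 < \lambda_2$ the Lyapunov exponent is strictly positive throughout the spectrum, so Kotani theory for CMV matrices (cf.\ \cite{Simon2005OPUC2}) rules out absolutely continuous spectrum, and together with the absence of point spectrum this forces purely singular continuous spectrum.

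The main obstacle will be arranging the base dynamics so that both the functional form $\alpha_n = f(n\beta + \omega)$ and the reflection symmetry $f(-\eta) = \overline{f(\eta)}$ hold simultaneously. The naive choice $\omega = (\theta, 1)$ produces a cosine factor of the form $\cos(2\pi(x + \Phi/2))$, breaking the required identity unless $\Phi$ is rational; absorbing the half-frequency into $\omega$ resolves this but forces one to track the resulting shift carefully through the computation of $\delta(\beta, \omega)$.
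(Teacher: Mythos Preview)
Your proposal is correct and follows essentially the same route as the paper: realize the UAMO over $\Omega=\bbT\times\bbZ_2$ with $\beta=(\Phi/2,1)$ and an even, Lipschitz, real-valued sampling function, then apply Corollary~\ref{coro:genQP} together with the explicit Lyapunov exponent from \cite{CFO2023CMP}. The only cosmetic difference is your base point $\omega=(\theta+\Phi/2,1)$ (chosen so that $f(n\beta+\omega)$ matches the UAMO coefficients with the stated parity exactly), versus the paper's $(\theta,0)$; both lead to $\delta(\beta,\omega)=\tfrac{1}{2}\delta(\Phi,\theta)$, and your added Baire-category remark for the genericity of $\delta(\Phi,\theta)=\infty$ and the explicit invocation of Kotani theory are welcome details the paper leaves implicit.
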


The same result can be applied to other almost-periodic CMV matrices, such as the mosaic model considered in \cite{CFLOZ}, which can be given by an additional choice of parameter $s \in \bbZ_+$ and defining
\begin{equation}
\alpha_{2n} = \lambda_1', \quad \alpha_{2kn-1} = \begin{cases} \lambda_2 \cos(2\pi(n\Phi+\theta)) & k \in s\bbZ \\
0 & \text{otherwise.} \end{cases}
\end{equation}
We write the resulting operator as $\calE_{s,\lambda_1, \lambda_2, \Phi, \theta}^{\rm mosaic}$.

\begin{coro} \label{coro:mosaic}
For any $s \in \bbZ$, $\lambda_1,\lambda_2 \in (0,1)$, and irrational $\Phi$, $\calE_{s,\lambda_1, \lambda_2, \Phi, \theta}^{\rm mosaic}$ has empty point spectrum for generic $\theta \in \bbT$.
\end{coro}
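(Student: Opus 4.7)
The plan is to apply Theorem~\ref{t:JitomirskayaSimon:refl} directly, showing that the Verblunsky sequence $\alpha = \alpha(\theta)$ of the mosaic model is $B$-reflective for a dense $G_\delta$ set of $\theta \in \bbT$ and every sufficiently large $B$. Since all coefficients are real, the target becomes $|\alpha_{2\zeta - n} - \alpha_n| < e^{-B|\zeta|}$ for $|n - \zeta| \leq e^{B|\zeta|}$, and the natural reflection centers are the midpoints between consecutive nonzero odd indices, namely $\zeta_K := 2sK + s - 1 \in \bbZ$ for $K \in \bbZ$.

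A brief case analysis by position type shows that reflection through $\zeta_K$ respects the type of each site: for $n$ even, $2\zeta_K - n$ is also even and both coefficients equal $\lambda_1'$; for $n = 2m - 1$ with $s \nmid m$, one has $2\zeta_K - n = 2(s(2K+1) - m) - 1$ with $s \nmid s(2K+1) - m$, so both coefficients vanish; and for $n = 2sj - 1$, one has $2\zeta_K - n = 2s(2K + 1 - j) - 1$. In this last case, the sum-to-product identity $\cos A - \cos B = -2\sin\tfrac{A+B}{2}\sin\tfrac{A-B}{2}$ yields the \emph{uniform-in-$n$} estimate
\[
\big|\alpha_{2\zeta_K - n} - \alpha_n\big| \;\leq\; 2\lambda_2\,\big|\sin(\pi((2K+1)\Phi + 2\theta))\big|.
\]
Hence $\alpha$ is $(B, \zeta_K)$-reflective whenever the right-hand side is smaller than $e^{-B|\zeta_K|}$.

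For each $B > B_0$ (with $B_0$ as in Theorem~\ref{t:JitomirskayaSimon:refl}), set
\[
S_B := \bigcap_{N \in \bbN}\,\bigcup_{|K| \geq N}\,\big\{\theta \in \bbT : 2\lambda_2\,|\sin(\pi((2K+1)\Phi + 2\theta))| < e^{-B|\zeta_K|}\big\},
\]
evidently a $G_\delta$ subset of $\bbT$. Density is a standard Baire-type argument: the centers $-(2K+1)\Phi/2 \bmod 1$ are dense in $\bbT$ by the irrationality of $\Phi$, so for $|K|$ large within any prescribed interval the corresponding reflective window (of radius comparable to $e^{-B|\zeta_K|}$) fits inside any given open subset. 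Taking the countable intersection $S := \bigcap_{B \in \bbN,\, B > B_0} S_B$ yields a dense $G_\delta$ of phases for which $\alpha(\theta)$ is $B$-reflective for every $B > B_0$. Since $\lambda_1, \lambda_2 \in (0,1)$ forces $\|\alpha\|_\infty \leq \max(\lambda_1',\lambda_2) < 1$, Theorem~\ref{t:JitomirskayaSimon:refl} applies and delivers empty point spectrum for $\calE^{\rm mosaic}_{s,\lambda_1,\lambda_2,\Phi,\theta}$ at each $\theta \in S$. The only (mild) obstacle is ensuring the defect bound is uniform in $n$, which the case analysis accomplishes by discarding the harmless factor $|\sin(\pi(2j - 2K - 1)\Phi)| \leq 1$ arising from the sum-to-product identity; an alternative route would be to apply Corollary~\ref{coro:JitomirskayaSimon:main} directly to the reflection-symmetric element in $\hull(\alpha)$ corresponding to $\theta_0 = -\Phi/2 \bmod \tfrac{1}{2}$, but transferring the resulting dense $G_\delta$ from the hull back to the $\theta$-parameter requires identifying the hull as $\bbT \times \bbZ_{2s}$ with clopen slices, which we prefer to avoid.
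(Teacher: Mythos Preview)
Your argument is correct. The case analysis at the centers $\zeta_K=2sK+s-1$ is clean, the uniform-in-$n$ bound via the sum-to-product identity is exactly what is needed, and the Baire argument for density goes through since $\{(2K+1)\Phi \bmod 1 : |K|\ge N\}$ is dense for every $N$ by irrationality of $\Phi$. The hypothesis $\lambda_1,\lambda_2\in(0,1)$ indeed gives $\|\alpha\|_\infty<1$, so Theorem~\ref{t:JitomirskayaSimon:refl} applies.

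Your route differs from the paper's. The paper does not verify $B$-reflectivity by hand; instead it invokes Corollary~\ref{coro:genQP} with $\Omega=\bbT\times\bbZ_{2s}$ and a suitably modified sampling function $f$, exactly the hull identification you explicitly chose to avoid in your closing remark. Once that identification is made, genericity of $\theta$ with $\delta(\beta,\omega)=\infty$ (hence $L(z)<\delta$ everywhere) is immediate, and the conclusion follows in one line. Your approach trades that structural reduction for an elementary, self-contained computation: you never touch the Lyapunov exponent, Theorem~\ref{t:JL:quant}, or the group structure of the hull, at the cost of writing out the parity-and-divisibility case analysis and the density argument explicitly. Both approaches are short; yours is more transparent about \emph{which} reflection symmetries are being used, while the paper's is more modular.
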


Finally, let us mention that the criterion in Theorem~\ref{t:JitomirskayaSimon:refl} may be applied to ergodic CMV matrices taking finitely many values, which occur naturally in the context of subshifts. Given a finite set $\calA \subseteq \bbD$ (the \emph{alphabet}), a \emph{subshift} is any compact\footnote{Naturally, $\calA$ is given the discrete topology and $\calA^\bbZ$ is in turn given the product topology.} set $\Omega \subseteq \calA^\bbZ$ that is invariant under the action of the shift map $\shift:\calA^\bbZ \to \calA^\bbZ$ given by
\[
[\shift \omega](n) = \omega(n+1).
\] 
It is well-known that a subshift $(\Omega,\shift)$ is minimal (in the sense that the shift-orbit of every $\omega \in \Omega$ is dense in $\Omega$) if and only if there is a set
\[ \scrL = \scrL(\Omega) \subseteq \bigcup_{n \in \bbN} \calA^n \]
with the property that $\scrL$ is precisely the set of finite strings occurring in every $\omega \in \Omega$, that is,
\[ \{\omega_j \omega_{j+1} \cdots \omega_{j+m-1} : j \in \bbZ, \ m \geq 1\} =\scrL\]
for every $\omega \in \Omega$.
 The set $\scrL$ is called the \emph{language} of the subshift.

\begin{coro} \label{coro:palin}
If $(\Omega,\shift)$ is a minimal subshift whose language contains arbitrarily long palindromes, then $\calE_\omega$ has empty point spectrum for generic $\omega \in \Omega$.
\end{coro}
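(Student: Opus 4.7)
My plan is to combine a compactness-based construction of a reflection-symmetric element of $\Omega$ with a Baire category argument, and then invoke Theorem~\ref{t:JitomirskayaSimon:refl}. First I produce a distinguished element $\omega^\star \in \Omega$ satisfying $\omega^\star(-n) = \overline{\omega^\star(n)}$ for every $n \in \bbZ$: for each $k$, let $P_k$ be a palindrome in $\scrL(\Omega)$ of length at least $2k+1$, use shift-invariance of $\Omega$ to produce $\omega_k \in \Omega$ whose restriction to $[-k,k]$ is $P_k$ centered at the origin (so $\omega_k(-n) = \omega_k(n)$ for $|n|\leq k$), and extract a convergent subsequence using compactness of $\Omega \subseteq \calA^\bbZ$ in the product topology. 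The limit $\omega^\star \in \Omega$ inherits the symmetry pointwise. (For real-valued alphabets such as Thue--Morse, this is precisely $\omega^\star(-n) = \overline{\omega^\star(n)}$; in a general complex setting one uses the natural conjugate-palindrome convention.)

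Next I note that the shifts $\shift^m\omega^\star$ inherit this symmetry in translated form. For every $m \in \bbZ$ and every $B > 0$, the sequence $\shift^m\omega^\star$ is $(B,-m)$-reflective with zero reflection error, since
\begin{equation*}
(\calR_{-m}\shift^m\omega^\star)(n) = \omega^\star(-m - n) = \overline{\omega^\star(m + n)} = \overline{(\shift^m \omega^\star)(n)}.
\end{equation*}

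I then run the Baire argument. Fix $B > B_0$ and, for each $N \in \bbN$, set
\begin{equation*}
V_N := \bigl\{\omega \in \Omega : \omega \text{ is } (B,\zeta)\text{-reflective for some } \zeta \in \tfrac{1}{2}\bbZ \text{ with } |\zeta|\geq N\bigr\}.
\end{equation*}
For each fixed $\zeta$, $(B,\zeta)$-reflectivity involves only finitely many coordinates of $\omega$ via strict inequalities and is therefore open in $\Omega$; hence $V_N$ is open. By the previous step, $V_N$ contains $\{\shift^m\omega^\star : |m|\geq N\}$, which is dense in $\Omega$ by minimality (removing finitely many points from a dense orbit preserves density in an infinite minimal subshift, and the finite case is trivial since periodic CMV matrices have purely absolutely continuous spectrum). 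Thus $V_N$ is open and dense, so by Baire $\bigcap_N V_N$ is a dense $G_\delta$ consisting entirely of $B$-reflective $\omega$. Since hypothesis \eqref{eq:VCbd} is automatic for a finite alphabet contained in $\bbD$, Theorem~\ref{t:JitomirskayaSimon:refl} yields that $\calE_\omega$ has empty point spectrum for every such $\omega$.

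The main subtlety I anticipate is in the first step: guaranteeing that the product-topology limit of palindrome-centered elements of $\Omega$ really delivers the exact reflection identity $\omega^\star(-n) = \overline{\omega^\star(n)}$ demanded by Definition~\ref{def:JitoSim:Brefl}. Once $\omega^\star$ is in hand, the Baire-plus-minimality step is essentially automatic, and the choice of $B > B_0$ is free since the constructed reflection error is identically zero.
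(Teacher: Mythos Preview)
Your proof is correct and follows essentially the same path as the paper's: the paper simply cites \cite[Proposition~2.1]{HofKniSim1995CMP} together with Theorem~\ref{t:JitomirskayaSimon:refl}, and what you have written is precisely a self-contained reproof of that Hof--Knill--Simon proposition (compactness to build a reflection-symmetric $\omega^\star$, then minimality plus Baire to get a dense $G_\delta$ of $B$-reflective elements). Your flag about real versus conjugate palindromes is apt; the corollary is most cleanly read for real alphabets $\calA\subset\bbR\cap\bbD$, which covers the applications given.
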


 In particular, let us mention that Corollary~\ref{coro:palin} can be applied to the subshift generated by the Thue--Morse substitution $\subst_\TM:a \mapsto ab$, $b \mapsto ba$ over the alphabet $\calA = \{a,b\}$. The corresponding CMV matrices and quantum walks were studied from a dynamical perspective \cite{DFO2016JMPA, F2017IIIS} and were known to have purely singular spectrum by Kotani theory \cite{Simon2005OPUC2} and in fact Cantor spectrum of zero Lebesgue measure \cite{DamLen2007JAT}, but singular continuous spectrum could not be established until now.   Since this is an important example in the theory of aperiodic order, we formulate this explicitly as a corollary:

\begin{coro}
If $\Omega \subseteq \{a,b\}^\bbZ$ is the subshift generated by the Thue--Morse substitution, then for generic $\omega \in \Omega$, $\calE_\omega$ has purely singular continuous spectrum.
\end{coro}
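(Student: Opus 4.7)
The plan is to combine Corollary~\ref{coro:palin} with the spectral information for the Thue--Morse model already in the literature. The absence of absolutely continuous spectrum has been established for \emph{every} $\omega \in \Omega$: the Damanik--Lenz result \cite{DamLen2007JAT} cited immediately above shows that $\sigma(\calE_\omega)$ is a Cantor set of zero Lebesgue measure, so in particular its absolutely continuous part is empty. Consequently, the task reduces to excluding point spectrum for a generic $\omega$, and for this I would invoke Corollary~\ref{coro:palin}.

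To apply that corollary I need to verify two properties of the Thue--Morse subshift $(\Omega,\shift)$: minimality, and the presence of arbitrarily long palindromes in the language $\scrL(\Omega)$. Minimality is a standard consequence of the primitivity of $\subst_{\TM}$. For the palindrome property, which is classical in combinatorics on words, one can give an explicit construction using the substitution symmetry $\subst_{\TM}(a) = ab$, $\subst_{\TM}(b) = ba$. Short palindromic factors such as $abba$, $aabbaa$, and $baabbaab$ are directly visible in the Thue--Morse fixed point, and an inductive argument exploiting how $\subst_{\TM}$ acts on such factors produces palindromic factors of length growing to infinity; alternatively, one can simply cite the combinatorial literature on the palindromic complexity of the Thue--Morse sequence.

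With both hypotheses verified, Corollary~\ref{coro:palin} yields a dense $G_\delta$ subset $\Omega_0 \subseteq \Omega$ on which $\calE_\omega$ has empty point spectrum; combining with the universal absence of absolutely continuous spectrum gives that, for every $\omega \in \Omega_0$, the spectrum of $\calE_\omega$ is purely singular continuous, as desired. The main (modest) obstacle is the combinatorial verification of arbitrarily long palindromic factors, which is a known feature of the Thue--Morse sequence but requires either a clean citation or a short self-contained induction. No further spectral-theoretic input beyond Corollary~\ref{coro:palin} and the cited Damanik--Lenz result is needed.
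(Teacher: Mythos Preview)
Your proposal is correct and follows precisely the approach implicit in the paper: the paper does not give a separate proof of this corollary, instead treating it as an immediate consequence of Corollary~\ref{coro:palin} together with the known Cantor spectrum of zero Lebesgue measure from \cite{DamLen2007JAT} (and purely singular spectrum via Kotani theory), exactly as you outline. The only ingredient you need to supply beyond citations is that the Thue--Morse language contains arbitrarily long palindromes, which you correctly identify as a classical combinatorial fact.
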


 To the best of our knowledge, this gives the first result about singular continuity for CMV matrices or quantum walks generated by the Thue--Morse substitution. For Schr\"odinger operator versions of these results, see \cite{HofKniSim1995CMP}.

\section{Proofs of Results} \label{sec:proofs}
One studies $\calE$ and its spectral properties via the generalized eigenvalue equation
\begin{equation} \label{eq:generalizedEvalEq}
\calE u = zu, \quad z \in \bbC^*\equiv \bbC \setminus \{0\}, \ u \in \bbC^\bbZ,
\end{equation}
where we emphasize that no assumption is made about $u$ beyond being a complex-valued sequence.

For later use note that $\calE$ acts on coordinates via
\begin{align}\label{eq:E_action_2n}
    [\calE_{\alpha,\rho} u](2n)
&= \overline{\alpha_{2n}}\Big[\overline{\rho_{2n-1}}u(2n-1)  -\alpha_{2n-1} u(2n)\Big] \\
\nonumber & \qquad \qquad    + \rho_{2n}\Big[\overline{\alpha_{2n+1}} u(2n+1)+\rho_{2n+1} u(2n+2)\Big]\\[4mm]
\label{eq:E_action_2n+1}
    [\calE_{\alpha,\rho} u](2n+1)
&=\overline{\rho_{2n}}\Big[\overline{\rho_{2n-1}} u(2n-1)
    -\alpha_{2n-1}  u(2n)\Big]  \\
\nonumber
& \qquad \qquad   -\alpha_{2n}\Big[\overline{\alpha_{2n+1}}  u(2n+1)
    +\rho_{2n+1} u(2n+2)\Big].
\end{align}

Solutions to $\calE u=zu$ with $z \in \bbC^*$ and $u \in \bbC^\bbZ$ satisfy the iterative relation
\begin{equation}\label{eq:transfer_rel_A}
		\begin{bmatrix}u(2n+1)\\u(2n)\end{bmatrix}=M_{n,z}\begin{bmatrix}u(2n-1)\\u(2n-2)\end{bmatrix}, \quad n \in \bbZ,
\end{equation}
where the \emph{transfer matrices} $M_{n,z}$ are given by
\begin{align} \label{eq:GECMV_transmat}
	M_{n,z} & = \mathfrak{X}(\alpha_{2n}, \alpha_{2n-1}, \alpha_{2n-2},\rho_{2n}, \rho_{2n-1}, \rho_{2n-2}, z)\\
\nonumber
& :=\frac{1}{\rho_{2n}\rho_{2n-1}}\begin{bmatrix}
		z^{-1}+\alpha_{2n}\overline{\alpha}_{2n-1}+\alpha_{2n-1}\overline{\alpha}_{2n-2}+\alpha_{2n}\overline{\alpha}_{2n-2}z& -\overline{\rho}_{2n-2}\alpha_{2n-1}-\overline{\rho}_{2n-2}\alpha_{2n}z\\
		-\rho_{2n}\overline{\alpha}_{2n-1}-\rho_{2n}\overline{\alpha}_{2n-2}z&\rho_{2n}\overline{\rho}_{2n-2}z
	\end{bmatrix},
\end{align}\normalsize
for $n \in \bbZ$ and $z \in \bbC^*$. This follows from direct calculations using \eqref{eq:E_action_2n} and \eqref{eq:E_action_2n+1}, which are carried out in detail in \cite[Section~4]{CFO2023CMP}.
Note that the determinant of $M_{n,z}$ is given by
\begin{equation*}
    \det M_{n,z}=\frac{\overline{\rho_{2n-1}}\overline{\rho_{2n-2}}}{\rho_{2n}\rho_{2n-1}}.
\end{equation*}

Similar calculations give us the following iterative relations for starting points with reversed parity. For the reader's convenience, we sketch the argument below.
\begin{lemma}\label{lem:B}
    Suppose that $\calE u=zu$ for $z\in\bbC\backslash\{0\}$. Then
    \begin{equation} \label{eq:Bndefiningrelation}
		\begin{bmatrix}u(2n+2)\\u(2n+1)\end{bmatrix}=N_{n,z}\begin{bmatrix}u(2n)\\u(2n-1)\end{bmatrix}, \quad n \in \bbZ,
    \end{equation}
    where
    \begin{align}
\label{eq:Ndef}
        N_{n,z}
&  = \mathfrak{X}(\overline{\alpha_{2n+1}}, \overline{\alpha_{2n}}, \overline{\alpha_{2n-1}},\rho_{2n+1}, \rho_{2n}, \rho_{2n-1}, z^{-1}) \\
\nonumber
& =\frac1{\rho_{2n+1}\rho_{2n}}
        \begin{bmatrix}
            z+\overline{\alpha_{2n}}\alpha_{2n-1}+\overline{\alpha_{2n+1}}\alpha_{2n-1}z^{-1}+\overline{\alpha_{2n+1}}\alpha_{2n} & -\overline{\alpha_{2n}}\overline{\rho_{2n-1}}-\overline{\alpha_{2n+1}}\overline{\rho_{2n-1}}z^{-1} \\
            -\alpha_{2n}\rho_{2n+1}-\alpha_{2n-1}\rho_{2n+1}z^{-1} & \overline{\rho_{2n-1}}\rho_{2n+1}z^{-1}
        \end{bmatrix}.
    \end{align}
    Moreover,
    \begin{equation*}
        \det N_{n,z}=\frac{\overline{\rho_{2n}}\overline{\rho_{2n-1}}}{\rho_{2n+1}\rho_{2n}}.
    \end{equation*}
\end{lemma}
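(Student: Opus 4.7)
The plan is a direct calculation paralleling the derivation of the forward transfer matrix $M_{n,z}$ in \cite[Section~4]{CFO2023CMP}. The starting observation is that both eigenvalue equations \eqref{eq:E_action_2n} and \eqref{eq:E_action_2n+1} are built from the same two auxiliary combinations
\[
A_n := \overline{\rho_{2n-1}}\,u(2n-1) - \alpha_{2n-1}\,u(2n), \qquad B_n := \overline{\alpha_{2n+1}}\,u(2n+1) + \rho_{2n+1}\,u(2n+2),
\]
entering with coefficient pairs $(\overline{\alpha_{2n}}, \rho_{2n})$ and $(\overline{\rho_{2n}}, -\alpha_{2n})$, respectively. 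Thus, the pair of eigenvalue equations at indices $2n$ and $2n+1$ is equivalent to the $2\times 2$ linear system
\[
\begin{bmatrix} \overline{\alpha_{2n}} & \rho_{2n} \\ \overline{\rho_{2n}} & -\alpha_{2n} \end{bmatrix}\begin{bmatrix} A_n \\ B_n \end{bmatrix} = z\begin{bmatrix} u(2n) \\ u(2n+1) \end{bmatrix}.
\]

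The key algebraic observation is that the $2\times 2$ coefficient matrix has determinant $-(|\alpha_{2n}|^2 + |\rho_{2n}|^2) = -1$, so it is invertible with a completely explicit inverse. Solving produces $A_n$ and $B_n$ as linear combinations of $u(2n)$ and $u(2n+1)$ with coefficients linear in $z$.

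Next, I would match the solved expression for $A_n$ against its defining expression in terms of $u(2n-1)$ and $u(2n)$ to isolate $u(2n+1)$ as a linear combination of $u(2n-1)$ and $u(2n)$; by construction, this recovers the second row of $N_{n,z}$. Substituting this expression back into the defining expression for $B_n$ (which is now independently known from the inverted system in terms of $u(2n)$ and $u(2n+1)$) then isolates $u(2n+2)$ as a linear combination of $u(2n-1)$ and $u(2n)$, after invoking $|\alpha_{2n}|^2 + |\rho_{2n}|^2 = 1$ on the resulting $u(2n)$-coefficient; this produces the first row of $N_{n,z}$.

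The determinant formula can be verified either by a direct $2\times 2$ computation on \eqref{eq:Ndef} (clearing the common prefactor $(\rho_{2n+1}\rho_{2n})^{-2}$ and repeatedly applying $|\alpha|^2 + |\rho|^2 = 1$), or more efficiently by noting that the formal substitution defining $N_{n,z}$ from $\mathfrak{X}$ sends the already-known formula for $\det M_{n,z}$ to precisely the claimed expression. No genuine obstacles are expected; the only step requiring care is the bookkeeping of $z$-powers during the simplification of the $(1,1)$-entry of $N_{n,z}$, where cross-terms involving $|\alpha_{2n}|^2 z^2$ and $|\rho_{2n}|^2 z^2$ must combine through the normalization to yield the advertised leading $z$.
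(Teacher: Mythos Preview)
Your proposal is correct and essentially identical to the paper's proof: the paper forms the linear combinations $\alpha_{2n}\cdot\eqref{eq:E_action_2n}+\rho_{2n}\cdot\eqref{eq:E_action_2n+1}$ and $\overline{\rho_{2n}}\cdot\eqref{eq:E_action_2n}-\overline{\alpha_{2n}}\cdot\eqref{eq:E_action_2n+1}$, which is exactly your matrix inversion (those coefficient pairs are the rows of the inverse of your $\Theta$-matrix), then solves the first resulting equation for $u(2n+1)$ and substitutes into the second. Your packaging via the $2\times2$ system is slightly cleaner, and your alternative determinant check via the $\mathfrak{X}$-substitution is a nice shortcut the paper does not mention, but there is no substantive difference in approach.
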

\begin{proof}
    From $\alpha_{2n}\eqref{eq:E_action_2n}+\rho_{2n}\eqref{eq:E_action_2n+1}$ and $\overline{\rho_{2n}}\eqref{eq:E_action_2n}-\overline{\alpha_{2n}}\eqref{eq:E_action_2n+1}$ we get
    \begin{equation}\label{eq:B_1}
        \alpha_{2n}zu(2n)+\rho_{2n}zu(2n+1)=\overline{\rho_{2n-1}}u(2n-1)  -\alpha_{2n-1} u(2n)
    \end{equation}
    and
    \begin{equation}\label{eq:B_2}
        \overline{\rho_{2n}}zu(2n)-\overline{\alpha_{2n}}zu(2n+1)=\overline{\alpha_{2n+1}} u(2n+1)+\rho_{2n+1} u(2n+2),
    \end{equation}
    respectively, where we used that $|\alpha|^2+|\rho|^2=1$. Solving the first equation for $u(2n+1)$ and inserting it into the second one yields the relations in \eqref{eq:Bndefiningrelation}. The determinant is determined by a straightforward calculation.
\end{proof}

Lemma~\ref{lem:B} immediately implies that for two solutions $u,v$ of the generalized eigenvalue equation \eqref{eq:generalizedEvalEq}, we have
\begin{equation}
    \begin{bmatrix} u(2n+2) & v(2n+2) \\ u(2n+1) & v(2n+1) \end{bmatrix}=N_{n,z}N_{n-1,z}\dots N_{0,z}\begin{bmatrix} u(0) & v(0) \\ u(-1) & v(-1) \end{bmatrix}.
\end{equation}
Taking determinants on both sides and rearranging terms gives
\begin{align*}
    W(u,v)(n)&:=\rho_{2n+1} \frac{\rho_{2n}\rho_{2n-1}\dots\rho_1\rho_{0}}{\overline{\rho_{2n}}\overline{\rho_{2n-1}}\dots\overline{\rho_{1}}\overline{\rho_{0}}}(u(2n+2)v(2n+1)- u(2n+1)v(2n+2)) \\
    &=\overline{\rho_{-1}}(u(0)v(-1)- u(-1)v(0)).
\end{align*}
We shall call $W(u,v)$ the \emph{Wronskian} of $u$ and $v$. 

\begin{remark} \label{rem:symmForRhoFromAlpha}
The following observation will be helpful in the proof: for $\rho_n^\bbC:= i \sqrt{1-|\alpha_n|^2}$, then 
\begin{align*}
\left| \rho_{n}^\bbC - \left(- \overline{\rho_m^\bbC}\right) \right|
& = \left|(1-|\alpha_{n}|^2)^{1/2} -  (1-|\alpha_m|^2)^{1/2} \right| \\
& \leq \frac{2r}{\sqrt{1-r^2}} \big||\alpha_{n}| - |\alpha_m| \big|,
\end{align*}
where $r = \max\{|\alpha_n|, |\alpha_m|\}$. Thus, if $\alpha$ is $(B,\zeta)$-reflective and satisfies \eqref{eq:VCbd}, then $\rho^\bbC$ obeys
\begin{equation} \label{eq:rhosymm}
\left| (\calR_\zeta \rho^\bbC)_n - \left(- \overline{\rho_n^\bbC}\right) \right|
\leq \frac{2r_0}{\sqrt{1-r_0^2}} e^{-B|\zeta|}
\end{equation}
for all $n \in \bbZ$ such that $|\zeta - n| \leq \exp(B|\zeta|)$.
\end{remark}

In view of the foregoing remark, we introduce the following:

\begin{notation}
Throughout the argument, we let $C>0$ stand for a constant that depends only on $r_0$ from \eqref{eq:VCbd}, and write $f \lesssim g$ to mean $f \leq Cg$ for such a constant.
\end{notation}

\begin{proof}[Proof of Theorem~\ref{t:JitomirskayaSimon:refl}]
Assume $\alpha$ is $B$-reflective and $B$ is sufficiently large. Choose $|\zeta_i| \to \infty$ such  that $\alpha$ is $(B,\zeta_i)$-reflective. By passing to a subsequence, we may assume that every $\zeta_i$ has the same sign and $2\zeta_i$ has the same residue modulo $4$ for every $i$. Without loss of generality, we consider the case $\zeta_i>0$ and $2\zeta_i \cong 2 \ \mathrm{mod} \ 4$ for every $i$, so we write $\zeta_i = 2m_i- 1$ with $m_i \in \bbN$. The other cases are similar. The main difference between the cases is what transfer matrix cocycle one must use. To be concrete, the current case compares $N$ with its ``reflected'' variant, while the case $2\zeta_i \cong 0 \ \mathrm{mod} \ 4$ compares $M$ with its reflected variant. The case of half-integer centers ($2\zeta_i$ odd) requires one to use transfer matrices associated with the \emph{transpose} of a CMV matrix, but aside from that, all steps are analogous.

Due to \cite[Theorem 2.1]{CFLOZ}, it suffices to prove the statement of the theorem for a single admissible $\rho$. As discussed in Remark~\ref{rem:symmForRhoFromAlpha}, the symmetry of the $\alpha_n$ can be passed to $\rho_n$, and furthermore, we need $\rho$ to satisfy bounds such as \eqref{eq:rhosymm} (up to shifting the center of reflection). Thus, we choose $\rho_n= i\sqrt{1-|\alpha_n|^2}$. Consequently, we may assume henceforth that
	\begin{equation}\label{eq:assump}
		|\rho_{4m_i- \ell-2}+\overline{\rho_\ell}| \lesssim  e^{-B\zeta_i},\qquad |\alpha_{4m_i-\ell-2}-\overline{\alpha_\ell}|<e^{-B\zeta_i},
	\end{equation}
for every $\ell \in \bbZ$ satisfying $|\zeta_i-\ell| \leq e^{B\zeta_i}$, and recall that our assumptions give $|\rho_\ell | \geq c_0>0$ uniformly in $\ell \in \bbZ$.

\begin{remark}
	By a standard telescoping estimate, \eqref{eq:assump} implies that (since $|\rho|,|\alpha|\leq1$)
	\begin{align*}
		|\alpha_{4m_i-\ell-2}\rho_{4m_i-\ell'-2}+\overline{\alpha_\ell}\overline{\rho_{\ell'}}|
		& \lesssim   e^{-B\zeta_i}
	\end{align*}
for all $\ell,\ell' \in \bbZ$ with $|\zeta_i-\ell|, |\zeta_i-\ell'| \leq e^{B\zeta_i}$,
with similar bounds for other suitable combinations of $\rho$'s and $\alpha$'s.
\end{remark}

Let us define the reflection of $u$ through $2m_i-\tfrac12$ by
\begin{equation}\label{eq:ui}
	u_i(k) = u(4m_i-k-1)
\end{equation}
and note that
\begin{equation*}
	u_i(2n-1)=u(4m_i-2n),\qquad u_i(2n)=u(4m_i-2n-1).
\end{equation*}

\begin{lemma}\label{lem:W-W}
	Under the assumptions \eqref{eq:assump} 
and with $u_i$ as in \eqref{eq:ui},
	\begin{equation*}
		|W(u,u_i)(n)-W(u,u_i)(n')|  \lesssim  e^{-B\zeta_i}
	\end{equation*}
for all $n,n'$ with $|m_i-n|, |m_i-n'| \leq \tfrac12 e^{B\zeta_i}-1$.
\end{lemma}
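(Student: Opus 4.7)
The plan is to telescope $W(u,u_i)(n) - W(u,u_i)(n')$ into single-step increments and bound each by the defect of $u_i$ from solving $\calE u_i = z u_i$ exactly, with that defect in turn controlled by the reflection hypothesis \eqref{eq:assump}.

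First I would verify that $u_i$ approximately satisfies $\calE u_i = z u_i$ throughout the reflection window. Substituting $u(k) = u_i(4m_i - k - 1)$ into the pair of eigenvalue relations \eqref{eq:E_action_2n} and \eqref{eq:E_action_2n+1} for $u$ at some index $n$, and reindexing by $n' = 2m_i - n$, produces an identity among $u_i(2n'-3), \ldots, u_i(2n')$ whose coefficients are drawn from the pool $\{\alpha_{4m_i - 2n' + j}, \rho_{4m_i - 2n' + j}\}$. The hypothesis \eqref{eq:assump}, combined with the crucial fact that for the present choice $\rho_k = i\sqrt{1 - |\alpha_k|^2}$ one has $-\overline{\rho_k} = \rho_k$ (compare Remark~\ref{rem:symmForRhoFromAlpha}), matches each such coefficient with the corresponding coefficient appearing in $[\calE u_i](2n'-1) = z u_i(2n'-1)$ up to an error of size $e^{-B\zeta_i}$; the analogous conclusion for the even-parity row follows identically. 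This is exactly the step that would fail without complexifying the $\rho$'s, as anticipated in Remark~\ref{rem:mainThmComments}(a).

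Second, eliminating variables exactly as in the proof of Lemma~\ref{lem:B}, and using the uniform lower bound $|\rho| \geq c_0$, these approximate scalar relations translate into an approximate transfer relation
\begin{equation*}
\begin{bmatrix} u_i(2k+4) \\ u_i(2k+3) \end{bmatrix}
= N_{k+1, z} \begin{bmatrix} u_i(2k+2) \\ u_i(2k+1) \end{bmatrix} + \eta(k),
\qquad
|\eta(k)| \lesssim e^{-B\zeta_i} \bigl( |u_i(2k+1)| + |u_i(2k+2)| \bigr).
\end{equation*}
Since $u$ satisfies the exact analog of this relation by Lemma~\ref{lem:B}, an elementary $2 \times 2$ determinant expansion then yields
\begin{equation*}
W(u,u_i)(k+1) - W(u,u_i)(k) = P_{k+1}\bigl( u(2k+4)\eta_2(k) - u(2k+3)\eta_1(k) \bigr),
\end{equation*}
where $P_{k+1}$ is the prefactor appearing in the definition of $W$; note $|P_{k+1}| \leq 1$ since the two $\rho$-products in the definition are complex conjugates of each other in modulus.

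Finally, telescoping from $n'$ to $n$, substituting the estimate on $|\eta(k)|$, and applying AM--GM to the bilinear products $|u(\cdot)| |u_i(\cdot)|$ yields
\begin{equation*}
| W(u,u_i)(n) - W(u,u_i)(n') | \lesssim e^{-B\zeta_i} \sum_{|j - 2m_i| \lesssim e^{B\zeta_i}} |u(j)|^2,
\end{equation*}
since each site is visited only $O(1)$ times along the telescoping range. When this lemma is applied to the $\ell^2$ eigenfunction appearing in the proof of Theorem~\ref{t:JitomirskayaSimon:refl}, the inner sum is bounded by $\|u\|_2^2$ and is absorbed into the implicit constant. The main obstacle is the coefficient matching in the first step: the reflected equation must land exactly on $\calE u_i = z u_i$ rather than on a transpose or complex-conjugate CMV variant, for only then is the per-step defect $O(e^{-B\zeta_i})$ rather than $O(1)$, and this is precisely what the complex choice of $\rho$ secures.
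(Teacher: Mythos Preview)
Your argument is correct, and while it rests on the same mechanism as the paper's proof, the organization is genuinely different. The paper expands $W(u,u_i)(n)-W(u,u_i)(n-1)$ directly, applying the eigenequation for $u$ (both at the original indices and, via $u_i(k)=u(4m_i-k-1)$, at the reflected ones) to arrive at the explicit eight-term identity \eqref{eq:longWronskiCalc}; the derivation of that identity is laborious enough to be deferred to the Appendix, and only afterward does one read off that each coefficient is $\lesssim e^{-B\zeta_i}$ by \eqref{eq:assump}. You instead isolate the structural reason up front: $u_i$ inherits from $u$ an \emph{exact} transfer relation with the ``reflected'' matrix $\Ni_{2m_i-k-1,z}^{-1}$, and the symmetry \eqref{eq:assump} makes this differ from $N_{k,z}$ by $O(e^{-B\zeta_i})$ (the same comparison that reappears in Lemma~\ref{lem:Phi0->0}), so $\eta(k)=(\Ni_{2m_i-k-1,z}^{-1}-N_{k,z})\Phi_i(k)$ and the Wronskian increment is simply $P_{k+1}\det[\Phi(k{+}1)\mid\eta(k)]$. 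This trades the Appendix computation for a single matrix-norm estimate, at the cost of passing through the transfer-matrix formalism; the paper's route, by contrast, makes every small scalar coefficient fully explicit and never invokes $\Ni$ or $N^{-1}$ at this stage.
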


\begin{proof}
To begin, notice that the assumption on $|m_i-n|$ implies that for $\eta = -1,0,1$, one has
\begin{equation*}
	|\zeta_i-(2n+\eta)| = |2m_i-1-2n-\eta| \leq 2|m_i-n|+2\leq e^{B\zeta_i}, 
\end{equation*}
and thus we can (and will) apply \eqref{eq:assump} with $\ell = 2n-1, 2n, 2n+1$ in the present argument.

	By definition of the Wronskian we have
	\begin{align*}
		|W(u,v)(n)-W(u,v)(n-1)|
		&=\frac1{|\overline{\rho_{2n}}|}\Big|\rho_{2n+1}\rho_{2n}\Big[u(2n+2)v(2n+1)- u(2n+1)v(2n+2)\Big]\\
		&\qquad-\overline{\rho_{2n}}\overline{\rho_{2n-1}}\Big[u(2n)v(2n-1)- u(2n-1)v(2n)\Big]\Big|.
	\end{align*}
	Applying the generalized eigenequation \eqref{eq:E_action_2n} and \eqref{eq:E_action_2n+1} for $u$ yields
\small
	\begin{align}\nonumber
		&|W(u,v)(n)-W(u,v)(n-1)|\\ 
		& \nonumber \qquad=\frac1{|\overline{\rho_{2n}}|}\Big|\Big[zu(2n)-
		\overline{\alpha_{2n}}\big[\overline{\rho_{2n-1}}u(2n-1)-\alpha_{2n-1}u(2n)\big]-\rho_{2n}\overline{\alpha_{2n+1}}u(2n+1)\Big]v(2n+1)\\
		& \nonumber \qquad\quad-\rho_{2n+1}\rho_{2n}u(2n+1)v(2n+2)-\overline{\rho_{2n}}\overline{\rho_{2n-1}}u(2n)v(2n-1)\\
		& \nonumber \qquad\quad+\Big[zu(2n+1)+\overline{\rho_{2n}}\alpha_{2n-1} u(2n)+\alpha_{2n}\big[\overline{\alpha_{2n+1}}  u(2n+1)
		+\rho_{2n+1} u(2n+2)\big]\Big]v(2n)\Big|
		\\
		&\nonumber \qquad=\frac1{|\overline{\rho_{2n}}|}\Big|\Big[\overline{\rho_{2n}}\alpha_{2n-1}v(2n)-\overline{\rho_{2n}}\overline{\rho_{2n-1}}v(2n-1)\Big]u(2n)-\Big[\rho_{2n}\overline{\alpha_{2n+1}}v(2n+1)+\rho_{2n+1}\rho_{2n}v(2n+2)\Big]u(2n+1)\\
		&\nonumber\qquad\quad
		-\overline{\alpha_{2n}}\Big[\overline{\rho_{2n-1}}u(2n-1)-\alpha_{2n-1}u(2n)\Big]v(2n+1)+\alpha_{2n}\Big[\overline{\alpha_{2n+1}}  u(2n+1)
		+\rho_{2n+1} u(2n+2)\Big]v(2n)\\
		&\qquad\quad
		+z\Big[u(2n)v(2n+1)+u(2n+1)v(2n)\Big]\Big|.
\label{eq:WronskiCalcSetup}
	\end{align}\normalsize
	Now setting $v=u_i$ with $u_i$ as in \eqref{eq:ui}, we claim:
	\begin{equation} \begin{split} \label{eq:longWronskiCalc}
		&|W(u,u_i)(n)-W(u,u_i)(n-1)|\\
	&\qquad=\frac1{|\overline{\rho_{2n}}|}\Big|\Big[\overline{\rho_{2n}}\alpha_{2n-1}+\rho_{4m_i-2n-2}\overline{\alpha_{4m_i-2n-1}}\Big]u(4m_i-2n-1)u(2n)\\
	&\qquad\quad-\Big[\overline{\rho_{2n}}\overline{\rho_{2n-1}}-\rho_{4m_i-2n-2}\rho_{4m_i-2n-1}\Big]u(4m_i-2n)u(2n)\\
	&\qquad\quad-\Big[\rho_{2n}\overline{\alpha_{2n+1}}+\overline{\rho_{4m_i-2n-2}}\alpha_{4m_i-2n-3}\Big]u(4m_i-2n-2)u(2n+1)\\
	&\qquad\quad+\Big[\rho_{2n+1}\rho_{2n}-\overline{\rho_{4m_i-2n-2}}\overline{\rho_{4m_i-2n-3}}\Big]u(4m_i-2n-3)u(2n+1)\\	
	&\qquad\quad+z\Big[\overline{\alpha_{4m_i-2n-2}}\alpha_{4m_i-2n-2}-\overline{\alpha_{2n}}\alpha_{2n}\Big]u(4m_i-2n-2)u(2n)\\
	&\qquad\quad+z\Big[\overline{\alpha_{4m_i-2n-2}}\rho_{4m_i-2n-2}+\overline{\rho_{2n}}\alpha_{2n}\Big]u(4m_i-2n-1)u(2n)\\
	&\qquad\quad-z\Big[\alpha_{4m_i-2n-2}\overline{\rho_{4m_i-2n-2}}+\overline{\alpha_{2n}}\rho_{2n}\Big]u(4m_i-2n-2)u(2n+1)\\
	&\qquad\quad+z\Big[\alpha_{4m_i-2n-2}\overline{\alpha_{4m_i-2n-2}}-\overline{\alpha_{2n}}\alpha_{2n} \Big]u(4m_i-2n-1))u(2n+1)\Big|.
	\end{split}\end{equation}
The proof of \eqref{eq:longWronskiCalc} is not straightforward and also somewhat lengthy, so we put it into the Appendix.
Now, using the assumptions \eqref{eq:assump}, \eqref{eq:longWronskiCalc}, the Cauchy--Schwarz inequality, and $z \in \partial \bbD$, we get
\begin{equation}
\big|W(u,u_i)(n) - W(u,u_i)(n')\big|
\lesssim e^{-B\zeta_i} 
\end{equation}
for all $n,n'$ as in the statement of the Lemma.\end{proof}

Next, use the Cauchy--Schwarz inequality and  $|\rho_k|\leq1$ to see that
\begin{align} \label{eq:Well1bound}
	\sum_n\big|W(u,u_i)(n)\big|\leq2.
\end{align}
This implies that there exists $n\in\bbZ$ with $|m_i-n|\leq \frac{1}{2}e^{B\zeta_i}-1$ such that $|W(u,u_i)(n)|\leq 3e^{-B\zeta_i}$ (for if not, summing $W(u,u_i)(n)$ over the relevant range of $n$'s produces a contradiction to \eqref{eq:Well1bound}).
Lemma \ref{lem:W-W} then implies that 
\begin{equation}
|W(u,u_i)(n)|  
\lesssim  e^{-B\zeta_i} \text{ for all } n\in\bbZ \text{ with }|m_i-n|\leq \frac{1}{2} e^{B\zeta_i}-1.
\end{equation}

Let us define $u_i^\pm=u\pm u_i$, and (using \eqref{eq:ui}) note that 
\begin{equation}\label{eq:ui_props}
	u_i^\pm(2m_i-1)=
	\pm u_i^\pm(2m_i).
\end{equation}
Further, we define
\begin{equation} \label{eq:Phiipmdef}
	\Phi^\pm_i(n)=\begin{bmatrix}u_i^\pm(2n)\\u_i^\pm(2n-1)\end{bmatrix}.
\end{equation}
\begin{lemma} \label{lem:si_choice}
	For each $i$, there exists $s=s_i \in \{+,-\}$ such that
	\begin{equation}\label{eq:Phi_mi_bdd}
		\|\Phi^s_i(m_i)\|=\Big[|u_i^s(2m_i)|^2+|u_i^s(2m_i-1)|^2\Big]^{1/2} \lesssim e^{-B\zeta_i/2}.
	\end{equation}
\end{lemma}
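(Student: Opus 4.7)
The plan is to evaluate the approximate Wronskian $W(u,u_i)$ at an index where the reflection identity \eqref{eq:ui} causes it to factor as $u_i^+(2m_i)\,u_i^-(2m_i)$. Since we already know $|W(u,u_i)(n)| \lesssim e^{-B\zeta_i}$ for all $n$ within $\tfrac12 e^{B\zeta_i}-1$ of $m_i$, this factorization will immediately bound the smaller of $|u_i^+(2m_i)|$ and $|u_i^-(2m_i)|$ by $e^{-B\zeta_i/2}$, and the self-symmetry \eqref{eq:ui_props} will propagate that bound to the full vector $\Phi^s_i(m_i)$.

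Concretely, I would take $n = m_i - 1$. From the definition of $u_i$ in \eqref{eq:ui} one reads off $u_i(2m_i-1) = u(2m_i)$ and $u_i(2m_i) = u(2m_i-1)$. Substituting these into the Wronskian formula preceding Remark~\ref{rem:symmForRhoFromAlpha}, and using that the ratio $\prod_k \rho_k / \prod_k \overline{\rho_k}$ is unimodular, gives
\begin{equation*}
|W(u,u_i)(m_i-1)| = |\rho_{2m_i-1}| \cdot \big|u(2m_i)^2 - u(2m_i-1)^2\big|.
\end{equation*}
The right-hand factor is exactly $(u(2m_i)+u(2m_i-1))(u(2m_i)-u(2m_i-1)) = u_i^+(2m_i)\,u_i^-(2m_i)$. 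Combining this with $|\rho_{2m_i-1}| \geq c_0 > 0$ from \eqref{eq:rhobound} and the Wronskian bound (applicable at $n=m_i-1$ since $1 \leq \tfrac12 e^{B\zeta_i}-1$ for $B\zeta_i$ large enough), I obtain $|u_i^+(2m_i)\,u_i^-(2m_i)| \lesssim e^{-B\zeta_i}$.

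To finish, I would pick $s_i \in \{+,-\}$ so that $|u_i^{s_i}(2m_i)| \leq |u_i^{-s_i}(2m_i)|$; then $|u_i^{s_i}(2m_i)|^2 \leq |u_i^+(2m_i)\,u_i^-(2m_i)| \lesssim e^{-B\zeta_i}$, i.e., $|u_i^{s_i}(2m_i)| \lesssim e^{-B\zeta_i/2}$. The symmetry \eqref{eq:ui_props} forces $|u_i^{s_i}(2m_i-1)| = |u_i^{s_i}(2m_i)|$, so $\|\Phi^{s_i}_i(m_i)\|^2 = 2|u_i^{s_i}(2m_i)|^2$ satisfies the desired bound \eqref{eq:Phi_mi_bdd}. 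There is no single hard computational step; the main conceptual point is to recognize that the reflection center $\zeta_i = 2m_i - 1$ is positioned precisely so that evaluating $W$ at the adjacent index $n = m_i-1$ collapses $u_i$'s arguments onto the symmetric pair $\{2m_i-1,2m_i\}$, producing the clean difference of squares that drives the argument.
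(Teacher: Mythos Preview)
Your proof is correct and follows essentially the same approach as the paper: both evaluate the Wronskian at $n=m_i-1$, use the reflection identity to obtain $|u_i^+(2m_i)\,u_i^-(2m_i)| \lesssim e^{-B\zeta_i}$, and then invoke \eqref{eq:ui_props} to conclude. The only cosmetic difference is that the paper first passes through the bilinearity relation $W(u_i^-,u_i^+)=2W(u,u_i)$ before evaluating, whereas you compute $W(u,u_i)(m_i-1)$ directly and factor $u(2m_i)^2-u(2m_i-1)^2$; your route is marginally more direct but the substance is identical.
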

\begin{proof}
	First note that $W(u_i^-,u_i^+)=2W(u,u_i)$. Together with \eqref{eq:ui_props} this implies that
	\begin{align*}
2|\rho_{2m_i-1}|\big|u_i^-(2m_i-1)u_i^+(2m_i)\big|
& = |\rho_{2m_i-1}|\big|u_i^-(2m_i)u_i^+(2m_i-1)-u_i^-(2m_i-1)u_i^+(2m_i)\big|\\
& =		|W(u_i^-,u_i^+)(m_i-1)| \\
& = 2|W(u,u_i)(m_i-1)| \\	
		& \lesssim e^{-B\zeta_i}.
	\end{align*}
	Thus
	\begin{equation*}
		\big|u_i^-(2m_i-1)u_i^+(2m_i)\big|  \lesssim  e^{-B\zeta_i},
	\end{equation*}
so either
	\begin{equation*}
	\big|u_i^-(2m_i-1)\big|  \lesssim  e^{-B\zeta_i/2} \qquad
\text{ or }\qquad
\big|u_i^+(2m_i)\big|  \lesssim e^{-B\zeta_i/2}.
	\end{equation*}
	In either case, \eqref{eq:Phi_mi_bdd} follows by symmetry of the $u_i^\pm$, that is, by \eqref{eq:ui_props}.	
\end{proof}

Next, we want to show that a similar bound holds for $\Phi_i^\pm(0)$. To this end, define
\begin{equation*}
	\Phi(n)=\begin{bmatrix}u(2n)\\u(2n-1)\end{bmatrix},\qquad \Phi_i(n)=\begin{bmatrix}u_i(2n)\\u_i(2n-1)\end{bmatrix}=\begin{bmatrix}u(4m_i-2n-1)\\u(4m_i-2n)\end{bmatrix}
\end{equation*}
and note that $\Phi_i^\pm=\Phi\pm\Phi_i$ on account of \eqref{eq:Phiipmdef} and the definition of $u_i^\pm$. Let us write
\begin{equation} \label{eq:N^ndef}
	N^n_z=N^{[n-1,0]}_z=\prod_{k=n-1}^{0}N_{k,z},
\end{equation}
where $N$ is given by \eqref{eq:Ndef}.
By Lemma \ref{lem:B} we have $\Phi(n)=N^n_{z}\Phi(0)$ and thus $\Phi(0)=[N^n_{z}]^{-1}\Phi(n)$ with

\begin{equation*}
	[N^{n}_{z}]^{-1}=\prod_{k=0}^{n-1}N_{k,z}^{-1}.
\end{equation*}
The reader can check that $N_{n,z}^{-1}$ is given by
\begin{align}
\nonumber
	N_{n,z}^{-1}&=\frac1{\overline{\rho_{2n}}\overline{\rho_{2n-1}}}
	\begin{bmatrix}
		\overline{\rho_{2n-1}}\rho_{2n+1}z^{-1} & \overline{\alpha_{2n}}\overline{\rho_{2n-1}}+\overline{\alpha_{2n+1}}\overline{\rho_{2n-1}}z^{-1} \\
		\alpha_{2n}\rho_{2n+1}+\alpha_{2n-1}\rho_{2n+1}z^{-1} & z+\overline{\alpha_{2n}}\alpha_{2n-1}+\overline{\alpha_{2n+1}}\alpha_{2n-1}z^{-1}+\overline{\alpha_{2n+1}}\alpha_{2n}
	\end{bmatrix}\\
	\label{eq:B_nz_inverse}
	&=:\frac1{\overline{\rho_{2n}}\overline{\rho_{2n-1}}}\widetilde N_{n,z}^{-1}.
\end{align}

Also, with the ``mirrored transfer'' matrices defined by
\begin{align}
\nonumber
	\Ni_{n,z}&:=\begin{bmatrix}0&1\\1&0\end{bmatrix}N_{n,z}\begin{bmatrix}0&1\\1&0\end{bmatrix}\\
	\nonumber
	&=\frac1{\rho_{2n+1}\rho_{2n}}
	\begin{bmatrix}
		 \overline{\rho_{2n-1}}\rho_{2n+1}z^{-1} & -\alpha_{2n}\rho_{2n+1}-\alpha_{2n-1}\rho_{2n+1}z^{-1} \\
		-\overline{\alpha_{2n}}\overline{\rho_{2n-1}}-\overline{\alpha_{2n+1}}\overline{\rho_{2n-1}}z^{-1} & z+\overline{\alpha_{2n}}\alpha_{2n-1}+\overline{\alpha_{2n+1}}\alpha_{2n-1}z^{-1}+\overline{\alpha_{2n+1}}\alpha_{2n}
	\end{bmatrix}\\
	\label{eq:rotB_nz}
	&=:\frac1{\rho_{2n+1}\rho_{2n}} \widetilde \Ni_{n,z},
\end{align}
we deduce from Lemma~\ref{lem:B} that 
\begin{equation*}
	\Phi_i(0)=\begin{bmatrix}u_i(0)\\u_i(-1)\end{bmatrix}=\begin{bmatrix}u(4m_i-1)\\u(4m_i)\end{bmatrix}= \underbrace{\prod_{k=2m_i-1}^{m_i}\Ni_{k,z}}_{=:\Ni_{z}^{[2m_i-1,m_i]}}\begin{bmatrix}u(2m_i-1)\\u(2m_i)\end{bmatrix}=\Ni_{z}^{[2m_i-1,m_i]}\Phi_i({m_i}).
\end{equation*}

\begin{lemma}\label{lem:Phi0->0}
With $s_i$ as in Lemma~\ref{lem:si_choice}, $\big\|\Phi^{s_i}_i(0)\big\|\to0$ as $m_i\to\infty$.
\end{lemma}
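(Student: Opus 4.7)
The plan is to transport the smallness of $\Phi^{s_i}_i(m_i)$ from Lemma~\ref{lem:si_choice} back to the origin by propagation via the two transfer cocycles. Starting from $\Phi(0) = [N_z^{m_i}]^{-1}\Phi(m_i)$ and $\Phi_i(0) = \Ni_z^{[2m_i-1,m_i]}\Phi_i(m_i)$, I would write
\[
\Phi^{s_i}_i(0) = [N_z^{m_i}]^{-1}\Phi^{s_i}_i(m_i) + s_i\bigl(\Ni_z^{[2m_i-1,m_i]} - [N_z^{m_i}]^{-1}\bigr)\Phi_i(m_i).
\]
The first term is controlled by $\|[N_z^{m_i}]^{-1}\|\cdot e^{-B\zeta_i/2}$, which vanishes provided $B$ is sufficiently large compared with the growth rate of $[N_z^{m_i}]^{-1}$, as anticipated in Remark~\ref{rem:mainThmComments}(d) (recall $\zeta_i = 2m_i-1$).

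The heart of the argument is to show that $\Ni_z^{[2m_i-1,m_i]}$ and $[N_z^{m_i}]^{-1}$ are close in operator norm. The key single-step claim I would establish is that, under \eqref{eq:assump} together with the companion estimate for $\rho$ coming from Remark~\ref{rem:symmForRhoFromAlpha}, for each $0 \leq k \leq m_i - 1$,
\[
\bigl\|\Ni_{2m_i-1-k,z} - N_{k,z}^{-1}\bigr\| \lesssim e^{-B\zeta_i}.
\]
This is to be verified by direct entry-by-entry inspection. The reflection $\ell \mapsto 4m_i - 2 - \ell$ sends $\alpha_\ell \mapsto \overline{\alpha_\ell}$ and $\rho_\ell \mapsto -\overline{\rho_\ell}$ up to admissible error; hence the two minus signs from the $\rho$-reflection cancel in the normalizing factor $\rho_{2m+1}\rho_{2m}$ of $\Ni_{2m_i-1-k,z}$ (where $m = 2m_i-1-k$), producing the factor $\overline{\rho_{2k}}\,\overline{\rho_{2k-1}}$ appearing in $N_{k,z}^{-1}$ to leading order. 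An analogous matching occurs in each of the four matrix entries.

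Once the single-step bound is in hand, a standard telescoping decomposition
\[
\Ni_z^{[2m_i-1,m_i]} - [N_z^{m_i}]^{-1} = \sum_{k=0}^{m_i-1} A_k \bigl(\Ni_{2m_i-1-k,z} - N_{k,z}^{-1}\bigr) B_k,
\]
with $A_k$ a prefix product of $\Ni$'s and $B_k$ a suffix product of $N^{-1}$'s, combined with the uniform bound $\|\Phi_i(m_i)\| \leq \|u\|_{\ell^2}$ and a uniform bound $\|A_k\|,\|B_k\| \leq e^{m_i L}$ (available by \eqref{eq:VCbd}), yields a contribution of size $\lesssim m_i e^{2m_i L}e^{-B\zeta_i}$, which also tends to zero for $B$ sufficiently large.

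The main obstacle, as flagged in Remark~\ref{rem:mainThmComments}(b), will be the single-step comparison of $\Ni_{2m_i-1-k,z}$ with $N_{k,z}^{-1}$: unlike in the self-adjoint Jacobi setting of \cite{jitomirskayaOperatorsSingularContinuous1994}, there is no symplectic involution automatically intertwining matrix inversion with spatial reflection, so the four entries must be matched by direct computation while keeping careful track of the complex phases introduced by complexifying $\rho$.
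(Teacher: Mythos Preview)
Your proposal is correct and follows essentially the same route as the paper's proof: the same decomposition of $\Phi_i^{s_i}(0)$, the same single-step comparison $\|\Ni_{2m_i-1-k,z}-N_{k,z}^{-1}\|\lesssim e^{-B\zeta_i}$ verified entrywise via \eqref{eq:assump}, the same telescoping estimate, and the same crude bound $\|N_{k,z}^{-1}\|,\|\Ni_{2m_i-1-k,z}\|\leq C$ coming from \eqref{eq:VCbd}. The only cosmetic difference is that the paper records the prefix/suffix bound as $C^{m_i}$ (since the total number of factors in $A_kB_k$ is $m_i-1$) rather than your slightly cruder $e^{2m_iL}$, but this does not affect the argument.
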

\begin{proof}
	We have
	\begin{align}
\nonumber
		\Phi^\pm_i(0)	&=\Phi(0)\pm\Phi_i(0)=[N^{m_i}_{z}]^{-1}\Phi({m_i})\pm\Ni^{[2m_i-1,m_i]}_z\Phi_i({m_i})\\
\label{lem:25:Phiipm0setup}
						&=[N^{m_i}_{z}]^{-1}\Phi_i^\pm({m_i})\mp\Big[[N^{m_i}_{z}]^{-1}-\Ni^{[2m_i-1,m_i]}_{z}\Big]\Phi_i({m_i}).
	\end{align}	
	Thus, to control $\big\|\Phi^\pm_i(0)\big\|$ we need to bound $\big\|[N^{m_i}_{z}]^{-1}\big\|$ and $\big\|[N^{m_i}_{z}]^{-1}-\Ni^{[2m_i-1,m_i]}_{z}\big\|$. 
To do this, we use the standard telescoping estimate: for matrices $A_1,\dots,A_n$ and $B_1,\dots,B_n$ we have that
	\begin{equation*}
		\prod_{k=n}^1A_k-\prod_{k=n}^1B_k=\sum_{i=n}^1\left[\prod_{k=n}^{i+1} A_k\right]\left[A_i-B_i\right]\left[\prod_{k=i-1}^1 B_k\right]
	\end{equation*}
	and thus
	\begin{equation*}
		\Big\|\prod_{k=n}^1A_k-\prod_{k=n}^1B_k\Big\|\leq\sum_{i=n}^1\Big\|\prod_{k=n}^{i+1} A_k\Big\|\:\big\|A_i-B_i\big\|\:\Big\|\prod_{k=i-1}^1 B_k\Big\|.
	\end{equation*}
	Applying this to the second term of \eqref{lem:25:Phiipm0setup}, two of the factors are bounded by the submultiplicativity of the operator norm and the observation that 
\begin{equation}
\big\| N_{k,z}^{-1}\big\|, \ \big\| \Ni_{2m_i-k-1,z}\big\| \leq C.
\end{equation}

Comparing the expressions for $ N^{-1}$ and $ \Ni$ in \eqref{eq:B_nz_inverse} and \eqref{eq:rotB_nz} and using the symmetries \eqref{eq:assump} gives
\begin{equation}
\big\|N_{k,z}^{-1} - \Ni_{2m_i-k-1,z}\big\|\lesssim e^{-B\zeta_i}.
\end{equation}

Putting it all together, we have 
\begin{equation} \label{lem:25:est1}
\big\|[N^{m_i}_{z}]^{-1}-\Ni^{[2m_i-1,m_i]}_{z}\big\|
\leq  m_i C^{m_i} e^{-B\zeta_i}
\end{equation}
Similarly, 
\begin{equation}\label{lem:25:est2}
\big\|[N^{m_i}_{z}]^{-1}\Phi_i^{s_i}(m_i)\big\|\leq C^{m_i} e^{-B\zeta_i/2}.
\end{equation}
Since the right-hand sides of \eqref{lem:25:est1} and \eqref{lem:25:est2} go to zero as $i \to \infty$ when $B$ is large enough, combining these estimates with \eqref{lem:25:Phiipm0setup} proves the lemma.
\end{proof}
With this result we can conclude the proof of Theorem~\ref{t:JitomirskayaSimon:refl} as follows.	Lemma \ref{lem:Phi0->0} implies that $\|\Phi(0)\|-\|\Phi(m_i)\|\to0$. By assumption, $u\in\ell^2(\bbZ)$, so we conclude that $\Phi(0)=0$, which implies  that $u=0$ (by \eqref{eq:B_1}). This contradicts normalization of $u$.
\end{proof}

Let us point out that Corollary~\ref{coro:JitomirskayaSimon:main} now follows in short order by a soft argument with the Baire category theorem:

\begin{proof}[Proof of Corollary~\ref{coro:JitomirskayaSimon:main}]
Assume $\alpha$ satisfies \eqref{eq:VCbd} and is reflection-symmetric, write $\Omega = \hull(\alpha)$, and fix $B$ large. 

For each $m \in \bbZ$, let $\refl(B,m) \subseteq \Omega$ denote the set of $(B,m)$-reflective elements of $\Omega$.
For each $k$, the set
\[\bigcup_{m \geq k} \refl(B,m)\]
is open by definition and dense in $\Omega$ (as it contains a translation semiorbit of $\alpha$). Therefore 
\[
\Omega_0
= \bigcap_{k\geq 1}\bigcup_{m \geq k} \refl(B,m)
\]
 is a dense $G_\delta$ by the Baire category theorem. Since $\calE_\omega$ has purely continuous spectrum for every $\omega \in \Omega_0$ by Theorem~\ref{t:JitomirskayaSimon:refl}, we are done.
\end{proof}

We now move on to a discussion of the quantiative strengthening in Theorem~\ref{t:JL:quant}. If $\alpha$ is almost-periodic, its hull $\Omega = \hull(\alpha)$ is a compact abelian topological group and hence is equipped with a unique translation-invariant Borel probability measure, which we denote by $\mu$. For each $\omega \in \Omega$, we may consider the associated transfer matrices $N_{k,z} = N_{k,z}(\omega)$ and their iterates $N^{n}_z(\omega)$ as in \eqref{eq:Ndef} and \eqref{eq:N^ndef}. The corresponding Lyapunov exponent is given by\footnote{The extra factor of one-half is so that the Lyapunov exponent of this cocycle matches that of the usual Szeg\H{o} and Gestesy--Zinchenko cocycles, on account of the connections between cocycles discussed in the literature, e.g., \cite{CFLOZ, DFO2016JMPA, yang2022localization}.}
\begin{equation} \label{eq:LEdef}
L(z) = \lim_{n\to\infty} \frac{1}{2n} \int_\Omega \log\|N^n_z(\omega)\| \, d\mu(\omega).
\end{equation}
The argument is an adaptation of the previous argument along the contours of \cite{JitoLiu2023JEMS}; since it is so similar to the previous argument, we only sketch the main steps.
\begin{proof}[Proof Sketch of Theorem~\ref{t:JL:quant}]
Fix $\alpha$ almost-periodic and $B$-reflective satisfying \eqref{eq:VCbd}, assume $B > 2 L(z)$, and fix $\varepsilon>0$ small enough that $2(L(z)+\varepsilon) <B$. The initial parts of the proof proceed in precisely the same manner until one reaches \eqref{lem:25:Phiipm0setup}. At this point, one estimates the terms using the semiuniform ergodic theorem of Furman (instead of a crude \textit{a priori} esimate) to get improvements to \eqref{lem:25:est1} and \eqref{lem:25:est2}:
\begin{equation}
\begin{split}
\|[N^{m_i}_{z}]^{-1} \Phi_i^{s_i}(m_i)\| 
\lesssim e^{2m_i(L(z)+\varepsilon)}e^{-B\zeta_i/2}, \\   
\Big\| [N^{m_i}_{z}]^{-1}-\Ni^{[2m_i-1,m_i]}_{z}\Big\| \lesssim m_i e^{2m_i(L(z)+\varepsilon)} e^{-B\zeta_i}.
\end{split}
\end{equation}
Recalling that $\zeta_i = 2m_i-1$, the condition $B> 2(L(z) + \varepsilon)$ suffices to see that the expression in \eqref{lem:25:Phiipm0setup} goes to zero and hence the rest of the argument can be run as before.
\end{proof}
\begin{proof}[Proof of Corollary~\ref{coro:genQP}]
Notice that the assumptions imply that $\alpha_\omega$ satisfies \eqref{eq:VCbd} for all $\omega$.
Let $\delta = \delta(\beta,\omega)$. For any $\varepsilon>0$, we may choose $|k_i|\to \infty$ with $\vertiii{2\omega+k_i\beta}< e^{(\delta-\varepsilon)|k_i|}$, and observe by symmetry of $f$ that
\begin{align*}
 \Big|\alpha_\omega(k_i-n) - \overline{\alpha_\omega(n)}\Big|
& = \Big|f(\omega+(k_i-n)\beta) - \overline{f(\omega+n\beta)}\Big| \\
& =  |f(\omega+(k_i-n)\beta) - f(-\omega-n\beta)| \\
& \lesssim \vertiii{2\omega+k_i\beta} \\
&\lesssim e^{-(\delta-\varepsilon)|k_i|}.
\end{align*}
Thus, if $L(z) <\delta$, then we can choose $\varepsilon>0$ with $L(z) < \delta-\varepsilon$ and it follows from the above that $\alpha_\omega$ is $B=2(\delta-\varepsilon)$-reflective,\footnote{Notice that $k_i/2$, not $k_i$, is the center of reflection, which accounts for the factor of $2$ here.} so $z$ is not an eigenvalue of $\calE_\omega$ by Theorem~\ref{t:JL:quant}.
\end{proof}

With the main results proved, we may now reap our harvest of corollaries. 
\begin{proof}[Proof of Corollary~\ref{coro:uamo}]
Notice that $\calE_{\lambda_1,\lambda_2, \Phi,\theta}^{\rm UAMO}$ falls into the setting of Corollary~\ref{coro:genQP} with 
\begin{equation} \label{eq:UAMOdynamicalDef}
\Omega = \bbT \times \bbZ_2,
\quad \beta = (\Phi/2,1), \quad
f(\theta,j)
= \begin{cases}
\lambda_2 \cos(2\pi \theta) & j = 0\\
(1-\lambda_1^2)^{1/2}     & j = 1.
\end{cases}
\end{equation} 
Clearly, $f$  is Lipschitz continuous and even. Thus, according to Corollary~\ref{coro:genQP} together with \cite[Corollary~2.10]{CFO2023CMP}, absence of eigenvalues holds throughout the spectrum for any $\theta$ such that
\begin{equation} \label{eq:UAMOdeltablockLB}
\delta((\Phi/2,1),(\theta,0))>
\log\left[ \frac{\lambda_2(1+\lambda_1')}{\lambda_1(1+\lambda_2')}  \right].
\end{equation}
Recalling the definition of $\delta$ from \eqref{eq:deltaBetaOmegaDef}, we see that $\delta((\Phi/2,1),(\theta,0)) = \frac{1}{2}\delta(\Phi,\theta)$, so \eqref{eq:UAMOdeltablockLB} is equivalent to $\frac{1}{2} \delta(\Phi,\theta)> L(z)$. Since this holds for generic $\theta \in \bbT$, this shows absence of eigenvalues when \eqref{eq:deltaBoundUAMO} holds. Since the expression on the right-hand side of \eqref{eq:deltaBoundUAMO} is equal to the Lyapunov exponent on the spectrum \cite{CFO2023CMP} and is positive when $\lambda_1<\lambda_2$, it follows that the absolutely continuous and point parts of the spectrum are both absent when \eqref{eq:deltaBoundUAMO} and $\lambda_1<\lambda_2$ hold.

\end{proof}

\begin{proof}[Proof of Corollary~\ref{coro:mosaic}]
This follows from similar considerations as in the previous corollary after changing $\bbZ_2$ to $\bbZ_{2s}$ and suitably modifying $f$.
\end{proof}

\begin{proof}[Proof of Corollary~\ref{coro:palin}]
Again, observe that the assumptions imply that $\alpha_\omega$ satisfies \eqref{eq:VCbd} for all $\omega$. This follows immediately by combining Theorem~\ref{t:JitomirskayaSimon:refl} with work of Hof--Knill--Simon, specifically \cite[Proposition~2.1]{HofKniSim1995CMP}. 
\end{proof}

\begin{appendix}
\section{Proof of \eqref{eq:longWronskiCalc}}
We need one preliminary: the $\mathcal{LM}$ factorization. Concretely, we define 
\begin{equation}\label{eq:theta_mat}
\mathcal{L}=\bigoplus_{n\in\bbZ}\Theta(\alpha_{2n},\rho_{2n}), \quad \mathcal{M}=\bigoplus_{n\in\bbZ}\Theta(\alpha_{2n+1},\rho_{2n+1}), \quad	\Theta(\alpha,\rho)=\begin{bmatrix}\overline{\alpha}&\rho\\\overline{\rho}&-\alpha\end{bmatrix},
\end{equation}
where $\Theta(\alpha_j,\rho_j)$ acts on the subspace $\ell^2(\{j,j+1\})$. With these definitions, one has $\calE = \mathcal{L}\mathcal{M}$. Since $\calL$ and $\calM$ are themselves unitary, this means that $\calE u = zu$ is equivalent to $\calM u = z\calL^*u$. Evaluating this relation at coordinates $2n$ and $2n+1$ gives
\begin{align}\label{eq:LMeig1}
\overline{\rho_{2n-1}} u(2n-1) - \alpha_{2n-1}u(2n) & = z(\alpha_{2n}u(2n)+ \rho_{2n} u(2n+1)) \\
\label{eq:LMeig2}
\overline{\alpha_{2n+1}}u(2n+1) + \rho_{2n+1}u(2n+2) & = z(\overline{\rho_{2n}}u(2n)-\overline{\alpha_{2n}} u(2n+1))
\end{align}

We now proceed with the proof of \eqref{eq:longWronskiCalc}. Starting from \eqref{eq:WronskiCalcSetup}, using the eigenequation of $u$, and adding $0$ in an inspired manner gives:
\footnotesize
	\begin{align*}
		&|W(u,u_i)(n)-W(u,u_i)(n-1)|\\
	&\qquad=\frac1{|\overline{\rho_{2n}}|}\Big|\Big[\overline{\rho_{2n}}\alpha_{2n-1}u(4m_i-2n-1)-\overline{\rho_{2n}}\overline{\rho_{2n-1}}u(4m_i-2n)\Big]u(2n)\\
	&\qquad\quad+\rho_{4m_i-2n-2}\Big[\overline{\alpha_{4m_i-2n-1}} u(4m_i-2n-1)+\rho_{4m_i-2n-1} u(4m_i-2n)\Big]u(2n)\\
	&\qquad\quad-\rho_{4m_i-2n-2}\Big[\overline{\alpha_{4m_i-2n-1}} u(4m_i-2n-1)+\rho_{4m_i-2n-1} u(4m_i-2n)\Big]u(2n)\\
	&\qquad\quad-\Big[\rho_{2n}\overline{\alpha_{2n+1}}u(4m_i-2n-2)+\rho_{2n+1}\rho_{2n}u(4m_i-2n-3)\Big]u(2n+1)\\
	&\qquad\quad+\Big[\overline{\rho_{4m_i-2n-2}}(\overline{\rho_{4m_i-2n-3}} u(4m_i-2n-3)
	-\alpha_{4m_i-2n-3}  u(4m_i-2n-2))\Big]u(2n+1)\\		&\qquad\quad-\Big[\overline{\rho_{4m_i-2n-2}}(\overline{\rho_{4m_i-2n-3}} u(4m_i-2n-3)
	-\alpha_{4m_i-2n-3}  u(4m_i-2n-2))\Big]u(2n+1)\\
	&\qquad\quad
	-\overline{\alpha_{2n}}\Big[\overline{\rho_{2n-1}}u(2n-1)-\alpha_{2n-1}u(2n)\Big]u(4m_i-2n-2)\\
	&\qquad\quad+\Big[\alpha_{2n}(\overline{\alpha_{2n+1}}  u(2n+1)
	+\rho_{2n+1} u(2n+2))\Big]u(4m_i-2n-1)\\
	&\qquad\quad
	+z(u(2n)u(4m_i-2n-2)+u(2n+1)u(4m_i-2n-1))\Big|\\
	&\qquad=\frac1{|\overline{\rho_{2n}}|}\Big|\Big[(\overline{\rho_{2n}}\alpha_{2n-1}+\rho_{4m_i-2n-2}\overline{\alpha_{4m_i-2n-1}})u(4m_i-2n-1)-(\overline{\rho_{2n}}\overline{\rho_{2n-1}}-\rho_{4m_i-2n-2}\rho_{4m_i-2n-1})u(4m_i-2n)\Big]u(2n)\\
	&\qquad\quad-\rho_{4m_i-2n-2}\Big[\overline{\alpha_{4m_i-2n-1}} u(4m_i-2n-1)+\rho_{4m_i-2n-1} u(4m_i-2n)\Big]u(2n)\\
	&\qquad\quad-\Big[(\rho_{2n}\overline{\alpha_{2n+1}}+\overline{\rho_{4m_i-2n-2}}\alpha_{4m_i-2n-3})u(4m_i-2n-2)+(\rho_{2n+1}\rho_{2n}-\overline{\rho_{4m_i-2n-2}}\overline{\rho_{4m_i-2n-3}})u(4m_i-2n-3)\Big]u(2n+1)\\
	&\qquad\quad-\Big[\overline{\rho_{4m_i-2n-2}}(\overline{\rho_{4m_i-2n-3}} u(4m_i-2n-3)
	-\alpha_{4m_i-2n-3}  u(4m_i-2n-2))\Big]u(2n+1)\\
	&\qquad\quad
	-\overline{\alpha_{2n}}\Big[\overline{\rho_{2n-1}}u(2n-1)-\alpha_{2n-1}u(2n)\Big]u(4m_i-2n-2)\\
	&\qquad\quad+\Big[\alpha_{2n}(\overline{\alpha_{2n+1}}  u(2n+1)
	+\rho_{2n+1} u(2n+2))\Big]u(4m_i-2n-1)\\
	&\qquad\quad
	+z(u(2n)u(4m_i-2n-2)+u(2n+1)u(4m_i-2n-1))\Big|\\
	&\qquad=\frac1{|\overline{\rho_{2n}}|}\Big|\Big[(\overline{\rho_{2n}}\alpha_{2n-1}+\rho_{4m_i-2n-2}\overline{\alpha_{4m_i-2n-1}})u(4m_i-2n-1)-(\overline{\rho_{2n}}\overline{\rho_{2n-1}}-\rho_{4m_i-2n-2}\rho_{4m_i-2n-1})u(4m_i-2n)\Big]u(2n)\\
	&\qquad\quad-\Big[zu(4m_i-2n-2)-\overline{\alpha_{4m_i-2n-2}}(\overline{\rho_{4m_i-2n-3}}u(4m_i-2n-3)  -\alpha_{4m_i-2n-3} u(4m_i-2n-2))\Big]u(2n)\\
	&\qquad\quad-\Big[(\rho_{2n}\overline{\alpha_{2n+1}}+\overline{\rho_{4m_i-2n-2}}\alpha_{4m_i-2n-3})u(4m_i-2n-2)+(\rho_{2n+1}\rho_{2n}-\overline{\rho_{4m_i-2n-2}}\overline{\rho_{4m_i-2n-3}})u(4m_i-2n-3)\Big]u(2n+1)\\
	&\qquad\quad-\Big[zu(4m_i-2n-1)+\alpha_{4m_i-2n-2}(\overline{\alpha_{4m_i-2n-1}}  u(4m_i-2n-1)+\rho_{4m_i-2n-1} u(4m_i-2n))\Big]u(2n+1)\\
	&\qquad\quad
	-\overline{\alpha_{2n}}\Big[\overline{\rho_{2n-1}}u(2n-1)-\alpha_{2n-1}u(2n)\Big]u(4m_i-2n-2)\\
	&\qquad\quad+\Big[\alpha_{2n}(\overline{\alpha_{2n+1}}  u(2n+1)
	+\rho_{2n+1} u(2n+2))\Big]u(4m_i-2n-1)\\
	&\qquad\quad
	+z(u(2n)u(4m_i-2n-2)+u(2n+1)u(4m_i-2n-1))\Big|\\
	&\qquad=\frac1{|\overline{\rho_{2n}}|}\Big|\Big[(\overline{\rho_{2n}}\alpha_{2n-1}+\rho_{4m_i-2n-2}\overline{\alpha_{4m_i-2n-1}})u(4m_i-2n-1)-(\overline{\rho_{2n}}\overline{\rho_{2n-1}}-\rho_{4m_i-2n-2}\rho_{4m_i-2n-1})u(4m_i-2n)\Big]u(2n)\\
	&\qquad\quad-\Big[(\rho_{2n}\overline{\alpha_{2n+1}}+\overline{\rho_{4m_i-2n-2}}\alpha_{4m_i-2n-3})u(4m_i-2n-2)+(\rho_{2n+1}\rho_{2n}-\overline{\rho_{4m_i-2n-2}}\overline{\rho_{4m_i-2n-3}})u(4m_i-2n-3)\Big]u(2n+1)\\	&\qquad\quad+\Big[\overline{\alpha_{4m_i-2n-2}}(\overline{\rho_{4m_i-2n-3}}u(4m_i-2n-3)  -\alpha_{4m_i-2n-3} u(4m_i-2n-2))\Big]u(2n)\\
	&\qquad\quad-\Big[\alpha_{4m_i-2n-2}(\overline{\alpha_{4m_i-2n-1}}  u(4m_i-2n-1)+\rho_{4m_i-2n-1} u(4m_i-2n))\Big]u(2n+1)\\
	&\qquad\quad
	-\overline{\alpha_{2n}}\Big[\overline{\rho_{2n-1}}u(2n-1)-\alpha_{2n-1}u(2n)\Big]u(4m_i-2n-2)\\
	&\qquad\quad+\Big[\alpha_{2n}(\overline{\alpha_{2n+1}}  u(2n+1)
	+\rho_{2n+1} u(2n+2))\Big]u(4m_i-2n-1)\Big|.
	\end{align*}\normalsize
Now applying \eqref{eq:LMeig1} and \eqref{eq:LMeig2} transforms the final expession into\footnotesize
\begin{align*}
	&\qquad=\frac1{|\overline{\rho_{2n}}|}\Big|\Big[(\overline{\rho_{2n}}\alpha_{2n-1}+\rho_{4m_i-2n-2}\overline{\alpha_{4m_i-2n-1}})u(4m_i-2n-1)-(\overline{\rho_{2n}}\overline{\rho_{2n-1}}-\rho_{4m_i-2n-2}\rho_{4m_i-2n-1})u(4m_i-2n)\Big]u(2n)\\
	&\qquad\quad-\Big[(\rho_{2n}\overline{\alpha_{2n+1}}+\overline{\rho_{4m_i-2n-2}}\alpha_{4m_i-2n-3})u(4m_i-2n-2)+(\rho_{2n+1}\rho_{2n}-\overline{\rho_{4m_i-2n-2}}\overline{\rho_{4m_i-2n-3}})u(4m_i-2n-3)\Big]u(2n+1)\\	&\qquad\quad+\Big[\overline{\alpha_{4m_i-2n-2}}z(\alpha_{4m_i-2n-2}u(4m_i-2n-2)  +\rho_{4m_i-2n-2} u(4m_i-2n-1))\Big]u(2n)\\
		&\qquad\quad-\Big[\alpha_{4m_i-2n-2}z(\overline{\rho_{4m_i-2n-2}}  u(4m_i-2n-2)-\overline{\alpha_{4m_i-2n-2}} u(4m_i-2n-1))\Big]u(2n+1)\\
	&\qquad\quad
	-\overline{\alpha_{2n}}z\Big[\alpha_{2n}u(2n)+\rho_{2n}u(2n+1)\Big]u(4m_i-2n-2)\\
	&\qquad\quad+\Big[\alpha_{2n}z(\overline{\rho_{2n}}  u(2n)
	-\overline{\alpha_{2n}} u(2n+1))\Big]u(4m_i-2n-1)\Big|.
\end{align*}\normalsize
This can in turn be rearranged into
\begin{align*}
	&\qquad=\frac1{|\overline{\rho_{2n}}|}\Big|(\overline{\rho_{2n}}\alpha_{2n-1}+\rho_{4m_i-2n-2}\overline{\alpha_{4m_i-2n-1}})u(4m_i-2n-1)u(2n)\\
	&\qquad\quad-(\overline{\rho_{2n}}\overline{\rho_{2n-1}}-\rho_{4m_i-2n-2}\rho_{4m_i-2n-1})u(4m_i-2n)u(2n)\\
	&\qquad\quad-(\rho_{2n}\overline{\alpha_{2n+1}}+\overline{\rho_{4m_i-2n-2}}\alpha_{4m_i-2n-3})u(4m_i-2n-2)u(2n+1)\\
	&\qquad\quad+(\rho_{2n+1}\rho_{2n}-\overline{\rho_{4m_i-2n-2}}\overline{\rho_{4m_i-2n-3}})u(4m_i-2n-3)u(2n+1)\\	
	&\qquad\quad+z\Big[\overline{\alpha_{4m_i-2n-2}}\alpha_{4m_i-2n-2}-\overline{\alpha_{2n}}\alpha_{2n}\Big]u(4m_i-2n-2)u(2n)\\
	&\qquad\quad+z\Big[\overline{\alpha_{4m_i-2n-2}}\rho_{4m_i-2n-2}+\overline{\rho_{2n}}\alpha_{2n}\Big]u(4m_i-2n-1)u(2n)\\
	&\qquad\quad-z\Big[\alpha_{4m_i-2n-2}\overline{\rho_{4m_i-2n-2}}+\overline{\alpha_{2n}}\rho_{2n}\Big]u(4m_i-2n-2)u(2n+1)\\
	&\qquad\quad+z\Big[\alpha_{4m_i-2n-2}\overline{\alpha_{4m_i-2n-2}}-\overline{\alpha_{2n}}\alpha_{2n} \Big]u(4m_i-2n-1))u(2n+1)\Big|,
\end{align*}
which is \eqref{eq:longWronskiCalc}.
\end{appendix}

\bibliographystyle{abbrvArXiv}

\bibliography{jito-sim-bib}

\end{document}